\documentclass[a4paper,12pt,dvipdfmx]{amsart}
\usepackage{amsmath}
\usepackage{amssymb}
\usepackage{enumerate}
\usepackage{amscd}
\usepackage{graphics}
\usepackage{latexsym}
\usepackage{verbatim} 
\usepackage{url}

\setlength{\topmargin}{0.4cm}
\setlength{\oddsidemargin}{1.25cm}
\setlength{\evensidemargin}{1.25cm}
\setlength{\textwidth}{14.8cm}
\setlength{\textheight}{21cm}
\setlength{\footnotesep}{0.5cm}
\setlength{\footskip}{1.8cm}

\pagestyle{plain}
\theoremstyle{plain}
\newtheorem{thm}{{\bf Theorem}}[section]
\newtheorem{cor}[thm]{{\bf  Corollary}}
\newtheorem{prop}[thm]{{\bf Proposition}}
\newtheorem{lemma}[thm]{{\bf Lemma}}
\newtheorem{fact}[thm]{{\bf Fact}}

\newtheorem{claim}[thm]{{\bf Claim}}

\theoremstyle{definition}
\newtheorem{define}[thm]{{\bf Definition}}

\newtheorem{question}[thm]{{\bf Question}}

\newcommand{\size}[1]{\left\vert {#1} \right\vert}
\newcommand{\p}{\mathcal{P}}

\newcommand{\seq}[1]{\langle {#1} \rangle}

\newcommand{\ka}{\kappa}
\newcommand{\la}{\lambda}
\newcommand{\om}{\omega}

\newcommand{\pkl}{\mathcal{P}_\ka \lambda}

\newcommand{\bbP}{\mathbb{P}}

\newcommand{\bbR}{\mathbb{R}}

\newcommand{\calA}{\mathcal{A}}

\title[]{A note on the tightness of $G_\delta$-modifications}
\author[T. Usuba]{Toshimichi Usuba}
%\date{\today}
\address[T. Usuba]
{Faculty of Fundamental Science and Engineering,
Waseda University, 
Okubo 3-4-1, Shinjyuku, Tokyo, 169-8555 Japan}
\email{usuba@waseda.jp}
\keywords{countably tight, $G_\delta$-modification, $\om_1$-strongly compact cardinal,
saturated filter}
\subjclass[2010]{Primary 03E55, 54A25, 54C35}

\begin{document}
\begin{abstract}
We construct a normal countably tight  $T_1$ space $X$ with $t(X_\delta) >2^\om$.
This is an answer to the question posed by Dow-Juh\'asz-Soukup-Szentmikl\'ossy-Weiss \cite{DJSSW}.
We also show that if the continuum is not so large,
then the tightness of $G_\delta$-modifications of countably tight spaces can be 
arbitrary large up to the least $\om_1$-strongly compact cardinal.
\end{abstract}

\maketitle

\section{Introduction}
For a topological space $X$, let $X_\delta$ be the $G_\delta$-modification of $X$,
that is, $X_\delta$ is the space $X$ equipped with topology generated by all $G_\delta$-subsets of $X$.

Bella and Spadaro \cite{BS} studied
the connection between the values of various cardinal functions taken
on $X$ and $X_\delta$, respectively. In their paper they posed the following question:
Is $t(X_\delta) \le 2^{t(X)}$ true for every (compact)
$T_2$ space $X$?
Recall that $t(X)$, the tightness number of $X$, is the least infinite cardinal $\ka$
such that for every $A \subseteq X$ and $p \in \overline{A}$,
there is $B \in [A]^{\le \ka}$ with $p \in \overline{B}$.
If $t(X)=\om$, $X$ is said to be \emph{countably tight}.

For this question, Dow-Juh\'asz-Soukup-Szentmikl\'ossy-Weiss \cite{DJSSW}
answered as follows:
\begin{fact}[\cite{DJSSW}]\label{fact_DJSSW}
\begin{enumerate}
\item If $X$ is a regular Lindel\"of $T_1$ space, then
$t(X_\delta)\le 2^{t(X)}$.
\item Under $V=L$,  for every cardinal $\ka$
there is a Fr\'echet-Urysohn space $X$ with $t(X_\delta) \ge \ka$.
\end{enumerate}
\end{fact}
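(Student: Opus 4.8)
\emph{Proof proposal for Fact~\ref{fact_DJSSW}.}

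\emph{Part~(1).} I would argue by an elementary-submodel reflection (Pol--\v{S}apirovski\u{\i} style), with the Lindel\"of hypothesis doing the essential work. Put $\kappa = t(X)$, fix $A\subseteq X$ and $p\in\overline{A}^{X_\delta}$, and aim for a $B\in[A]^{\le 2^\kappa}$ with $p\in\overline{B}^{X_\delta}$. Choose a large regular $\theta$ and $M\prec H(\theta)$ with $X,A,p\in M$, ${}^{\omega}M\subseteq M$, $|M| = 2^\kappa$ and $2^\kappa\subseteq M$ (possible since $(2^\kappa)^\omega = 2^\kappa$: take $M$ to be the union of an $\omega_1$-chain of submodels of size $2^\kappa$, each absorbing the $\omega$-sequences of the previous one). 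Put $B = A\cap M$, so $|B|\le 2^\kappa$; it remains to see $p\in\overline{B}^{X_\delta}$. Since $X$ is regular Lindel\"of it is normal, so every $G_\delta$-set containing $p$ contains a zero-set $Z$ with $p\in Z$ (again a $G_\delta$-set), and it therefore suffices to show that $B$ meets every zero-set $Z$ with $p\in Z$. Suppose $B$ misses some zero-set $Z = f^{-1}(\{0\})$ with $f\colon X\to[0,1]$ continuous, $f(p) = 0$.

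The crux is to reflect $Z$ into $M$: I would show, using that each closed set $f^{-1}([1/n,1])$ is Lindel\"of (being closed in $X$) together with regularity and $t(X)\le\kappa$, that there is a zero-set $Z'\in M$ with $p\in Z'\subseteq Z$. The idea is that an open set of $M$ containing $p$ only has to avoid a closed set not containing $p$; Lindel\"of-ness lets one cover the $F_\sigma$-set $X\setminus Z = \bigcup_n f^{-1}([1/n,1])$ from inside $M$ by \emph{countably} many open sets whose closures omit $p$, countable tightness keeps the witnessing subsets of size $\le\kappa$ (hence inside $M$, as $\kappa\subseteq M$ and $M$ contains their enumerations), and the closure of $M$ under $\omega$-sequences then assembles the countably many pieces into a single $Z'\in M$. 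Granting this, $Z'\in M$ is a $G_\delta$-neighbourhood of $p$, so $p\in\overline{A}^{X_\delta}$ gives $A\cap Z'\neq\emptyset$, hence by elementarity $A\cap Z'\cap M\neq\emptyset$, i.e.\ $B\cap Z'\neq\emptyset$; since $Z'\subseteq Z$ this contradicts $B\cap Z = \emptyset$. Thus $p\in\overline{B}^{X_\delta}$, so $t(X_\delta)\le 2^{t(X)}$. The main obstacle is precisely this reflection step: the function $f$ (equivalently the $G_\delta$-presentation of $Z$) need not lie in $M$, and it is exactly Lindel\"of-ness that lets a $G_\delta$-neighbourhood of $p$ be pinched down to one $M$ can see; without it the conclusion fails, by part~(2).

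\emph{Part~(2).} Here I would build $X$ directly in $L$. Fix the target $\kappa$ and a regular $\lambda\ge\kappa$; in $L$ one has $\square_\lambda$ and $\diamondsuit$ (hence, e.g., a non-reflecting stationary $S\subseteq\{\alpha<\lambda : \cf(\alpha) = \omega\}$ carrying a coherent ladder system $\langle c_\alpha : \alpha\in S\rangle$). The ``target shape'' is the fat point $\lambda\cup\{\infty\}$ in which the neighbourhoods of $\infty$ are the complements of sets of size $<\lambda$: there $\infty\in\overline{A}^{X_\delta}$ forces $|A|\ge\lambda$, so $t(X_\delta)\ge\lambda\ge\kappa$ --- but this space is \emph{not} Fr\'echet--Urysohn, since $\infty$ has no nontrivial convergent sequences (every countable set is a neighbourhood-complement). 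The plan is therefore to spread $\infty$ out along the tree/ladder structure of $\langle c_\alpha\rangle$ into a $T_1$ space where (i) the points of $\lambda$ are isolated, so Fr\'echet--Urysohn-ness there is automatic; (ii) the coherence of the ladder system lets one extract, from every $A$ with $\infty\in\overline{A}^{X}$, a sequence converging to $\infty$; yet (iii) no countable family of basic neighbourhoods of $\infty$ ``reaches past'' a set of size $<\lambda$ of the ordinals, so that $\infty\in\overline{A}^{X_\delta}$ still forces $|A|\ge\lambda$. Then $\lambda\subseteq X$ witnesses $t(X_\delta)\ge\lambda\ge\kappa$.

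The delicate point --- and the reason $V = L$ (really $\square_\lambda$ and $\diamondsuit$) is invoked --- is the compatibility of (ii) and (iii). A naive $\lambda$-fan, with neighbourhoods of $\infty$ indexed by arbitrary functions $\lambda\to\omega$, collapses under the $G_\delta$-modification: already the constant functions $\alpha\mapsto n$ have countable intersection $\{\infty\}$, so $\infty$ becomes isolated in $X_\delta$ and $t(X_\delta) = \omega$. One must cut the admissible neighbourhoods of $\infty$ down to a family that is rich enough to witness convergent sequences into $\infty$ (for (ii)) yet so constrained that countable subfamilies only ``cover'' $<\lambda$-sized sets of ordinals (for (iii)); it is the non-reflection/coherence provided by $L$ that makes such a family exist. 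I expect the bulk of the proof to be the recursive construction along the $\square$-sequence that arranges (i)--(iii) simultaneously.
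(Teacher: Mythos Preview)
The paper does not prove Fact~\ref{fact_DJSSW}: it is quoted as a result of \cite{DJSSW} and no argument is supplied, so there is no proof here to compare your proposal against.

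For indirect context, the paper does prove a cognate statement---the Proposition immediately following Fact~\ref{1.6}, bounding $t(X_\delta)$ by the least $\om_1$-strongly compact cardinal for countably tight $X$---and remarks that its proof ``is essentially the same to in \cite{DJSSW}.'' That argument is not an elementary-submodel reflection: it fixes an $\om_1$-complete fine ultrafilter on $\pkl$ and uses it to split $A$ into countably many pieces $A_n$ with $p\notin\overline{A_n}$, whence $p\notin\overline{A}^\delta$. If the \cite{DJSSW} proof of part~(1) has the same shape (with Lindel\"of-ness supplying the combinatorics that the ultrafilter supplies here), then your elementary-submodel route is a genuinely different approach.

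On the substance of your sketch for (1), the step you yourself flag as the ``main obstacle'' is a real gap and your outline does not close it. You want, for an arbitrary zero-set $Z=f^{-1}\{0\}\ni p$ with $f\notin M$, to produce $Z'\in M$ with $p\in Z'\subseteq Z$; your plan is to cover each closed $f^{-1}([1/n,1])$ by open sets from $M$ whose closures miss $p$. But Lindel\"of-ness yields a countable subcover only from among \emph{all} open sets separating points of $f^{-1}([1/n,1])$ from $p$, not from those lying in $M$, and you give no reason why the open sets in $M$ suffice to cover a closed set that $M$ does not see. The invocation of $t(X)\le\ka$ does not bridge this: tightness bounds the size of witnesses to closure, not the size of covers, and nothing forces a point $x\notin M$ with $x\neq p$ to lie in some $U\in M$ with $p\notin\overline{U}$. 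As written the argument does not go through. For part~(2), what you have is a plan rather than a proof: the topology is never specified, and the tension you identify between your conditions (ii) and (iii) is precisely the content of the construction and is left unresolved.
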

The clause (1) of Fact \ref{fact_DJSSW} 
is a theorem of ZFC.
However (2) is a consistency result,
and they asked the following natural question:
\begin{question}
Is there a ZFC example of a countably tight Hausdorff
(or regular, or Tychonoff) space $X$ for which $t(X_\delta) > 2^{t(X)}$?
\end{question}
In this paper we give a positive answer to their question.
\begin{thm}\label{thm1.3}
There is a normal countably tight $T_1$ space $X$ such that $t(X_\delta) >2^\om$.
\end{thm}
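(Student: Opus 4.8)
The plan is to take $X$ of the simplest possible shape, an \emph{ideal space}. Fix a cardinal $\ka>2^\om$ and a free ideal $I$ on $\ka$, i.e.\ $[\ka]^{<\om}\subseteq I\subsetneq\p(\ka)$, and let $X=X_I$ be the set $\ka\cup\{p\}$ in which each $\alpha<\ka$ is isolated while the sets $U_S:=\{p\}\cup(\ka\setminus S)$ for $S\in I$ form a neighbourhood base at the extra point $p$. Such a space is $T_1$, and in fact normal: its closed sets are precisely the members of $I$ together with the sets $\{p\}\cup C$ for $C\subseteq\ka$, and two disjoint closed sets (at most one of which contains $p$) are separated using that $\ka$ is open and discrete and that each $U_S$ is open. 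Let $I_\sigma$ denote the $\sigma$-ideal on $\ka$ generated by $I$. Since a $G_\delta$-set containing $p$ is, on $\ka$, a set $\ka\setminus S$ with $S$ a countable union of members of $I$, the $G_\delta$-neighbourhoods of $p$ have $\{U_S:S\in I_\sigma\}$ as a base, and therefore, for every $A\subseteq\ka$,
\[
p\in\overline{A}^{\,X}\iff A\notin I,\qquad\qquad p\in\overline{A}^{\,X_\delta}\iff A\notin I_\sigma .
\]

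Thus Theorem~\ref{thm1.3} follows once we produce a free ideal $I$ on some $\ka>2^\om$ satisfying: (i) every $I$-positive set has a countable $I$-positive subset; (ii) $[\ka]^{\le 2^\om}\subseteq I_\sigma$; and (iii) $\ka\notin I_\sigma$. Indeed, (i) is exactly the statement that $X_I$ is countably tight at $p$ (it is trivially so at every isolated point), while (ii) and (iii) say that $p\in\overline{\ka}^{\,X_\delta}$ but $p\notin\overline{B}^{\,X_\delta}$ for every $B\in[\ka]^{\le 2^\om}$, so that $t((X_I)_\delta)\ge(2^\om)^+$, with the pair $(p,\ka)$ as a witness. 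The difficulty is that (i) and (ii)+(iii) pull against each other. Condition (i) forces $I$-positivity to be ``detected countably'': there must be a countable $I$-positive set, on which $I$ then behaves like the cofinite ideal -- the ideal-theoretic trace of a sequence that converges to $p$ in $X$ but not in $X_\delta$. Conditions (ii)+(iii), on the other hand, force $I_\sigma$ to be a \emph{proper} $\sigma$-ideal squeezed strictly between $[\ka]^{\le 2^\om}$ and $\p(\ka)$. A purely cardinality-based ideal, or a $\sigma$-complete one, fails one of these requirements outright, so $I$ must be more subtle.

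The strategy for the construction is to choose $\ka$ just above $2^\om$ and build $I$ by transfinite recursion, gluing together countably many cofinite-like ``fibres'' -- which take care of (i) and of the sequences that converge to $p$ in $X$ but not in $X_\delta$ -- while carrying along a global book-keeping ensuring that every subset of size at most $2^\om$ is swallowed by countably many members of $I$, yet $\ka$ itself is not; a suitably saturated free filter on the index set of the fibres is what drives this book-keeping. I expect this recursion -- in particular, checking that the countably-detected positivity of (i) survives the amalgamation of fibres demanded by (ii) -- to be the sole real obstacle. Once $I$ is available, the rest is immediate: $X:=X_I$ is normal and $T_1$ by the first paragraph, countably tight by (i), and $p$ together with $A:=\ka$ witnesses $t(X_\delta)\ge(2^\om)^+>2^\om$ by (ii) and (iii); this is Theorem~\ref{thm1.3}.
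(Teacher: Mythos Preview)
Your reduction is correct and clean: an ideal space $X_I$ is normal $T_1$, countable tightness at $p$ is exactly your condition (i), and (ii)+(iii) translate into $t((X_I)_\delta)\ge(2^\om)^+$. This reduction is in fact the content of the paper's Lemma~\ref{normal}, read in the opposite direction. But the reduction is the easy part. You have not constructed the ideal, and your own text acknowledges this: the ``strategy'' paragraph contains no definition of $I$, only a hope that a recursion with unspecified bookkeeping and an unspecified ``suitably saturated free filter on the index set of the fibres'' will close. At present the proposal amounts to restating Theorem~\ref{thm1.3} as ``there exists a free ideal on $(2^\om)^+$ with (i)--(iii)'' and stopping.

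The gap is genuine, not cosmetic. Conditions (i) and (ii)+(iii) really do pull against each other in the way you describe, and there is no obvious direct recursion that threads them; the naive candidates (bounded-cardinality ideals, fibre ideals over a partition into countable blocks, etc.) fail one of the three conditions outright, as you can check. The paper does not build $I$ by elementary recursion at all. It works instead with $\ka=(2^\om)^+$ and a large $\la$, takes the compact space $\mathrm{Fine}(\p_\ka\la)$ of fine ultrafilters, and uses the Arhangel'ski\u\i--Pytkeev theorem to get a countably tight $C_p$-space; the inequality $t(X_\delta)\ge\ka$ comes from a covering property of $\mathrm{Fine}(\p_\ka\la)$ by closed $G_\delta$ sets indexed by countable partitions. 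The delicate hypothesis needed for this (that every countable family of fine ultrafilters is simultaneously split by a countable partition) is verified via a generic-ultrapower argument showing that its failure would produce a weakly Mahlo cardinal below $2^\om$ --- impossible when $\ka=(2^\om)^+$. Only at the very end does the paper pass to an ideal-type space, via the trivial Lemma~\ref{normal}. So the machinery that actually produces your ideal $I$ is $C_p$-theory together with a saturation/precipitousness argument about intersections of ultrafilters; nothing in your sketch suggests how a bare transfinite recursion would replace that. To complete your approach you would need to either carry out that recursion in detail --- which would be a new and interesting proof --- or import the paper's ultrafilter machinery, at which point the ideal-space framing is no longer doing any work.
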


We  also observe 
some connection between $\om_1$-strongly compact cardinals and
the tightness of $G_\delta$-modifications.
Usuba \cite{Usuba} studied 
the Lindel\"of number of $G_\delta$-modifications of compact spaces,
and proved the following equality:
\[
\text{the least $\om_1$-strongly compact}
=\sup\{L(X_\delta) \mid \text{$X$ is compact $T_2$\,}\}.
\]

Under some assumption, we  prove
similar results for the tightness of $G_\delta$-modifications.

\begin{thm}\label{thm1.4}
\begin{enumerate}
\item Suppose $\ka$ is the least $\om_1$-strongly compact cardinal.
Then for every countably tight space $X$ we have $t(X_\delta) \le \ka$.
\item Suppose there is no weakly Mahlo cardinal $ <2^\om$ (e.g., CH holds).
If there is no $\om_1$-strongly compact cardinal below a cardinal $\nu$,
then there is a normal countably tight $T_1$ space $X$ with $t(X_\delta) \ge \nu$.
%
%then for every cardinal $\nu$,
%there is a countably tight normal $T_1$ space $X$ such that $t(X_\delta)  \ge \nu$.
%\item If $\ka$ is the least $\om_1$-strongly compact cardinal,
%then for every cardinal $\nu<\ka$,
%there is a countably tight normal $T_1$ space $X$ such that $t(X_\delta)  \ge \nu$.
%\end{enumerate}
\end{enumerate}
\end{thm}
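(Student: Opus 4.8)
The plan is to prove (1) by a short argument—recasting $G_\delta$-closure as a problem about a $\sigma$-ideal and applying a single $\omega_1$-complete fine ultrafilter—and (2) by isolating an abstract "ideal" version of the desired space and extracting the ideal from the failure of $\omega_1$-strong compactness below $\nu$, building on Theorem \ref{thm1.3}. For (1), fix a countably tight $X$, $A\subseteq X$ and $p\in\overline A^{X_\delta}$; we may assume $p\notin A$ and $|A|\ge\kappa$ (otherwise $B=\{p\}$ or $B=A$ already works). First I would record the reformulation: letting $I_p$ be the $\sigma$-ideal on $A$ of the sets coverable by countably many closed subsets of $X$ omitting $p$, for every $Z\subseteq A$ one has $p\in\overline Z^{X_\delta}$ iff $Z\notin I_p$, so it suffices to rule out $[A]^{<\kappa}\subseteq I_p$. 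Assume it holds. Since $\kappa$ is $\omega_1$-strongly compact, fix an $\omega_1$-complete fine ultrafilter $U$ on $\mathcal P_\kappa(A)$, and for each $x\in\mathcal P_\kappa(A)$ choose closed sets $C^x_n$ $(n<\omega)$ with $p\notin C^x_n$ and $x\subseteq\bigcup_n C^x_n$. For $a\in A$ the sets $\{x:a\in x\wedge a\in C^x_n\}$ $(n<\omega)$ have union $\{x:a\in x\}\in U$, so by $\omega_1$-completeness one of them lies in $U$; let $n(a)$ be the least such and put $A_n=\{a\in A:n(a)=n\}$, a countable partition of $A$. The crux—the only place countable tightness is used—is that $p\notin\overline{A_n}^X$ for every $n$: if $p\in\overline{A_n}^X$, choose by countable tightness a countable $D\subseteq A_n$ with $p\in\overline D^X$; then $\{x:D\subseteq x\wedge D\subseteq C^x_n\}=\bigcap_{a\in D}\{x:a\in x\wedge a\in C^x_n\}\in U$ by $\omega_1$-completeness, and picking $x^\ast$ in it gives $p\in\overline D^X\subseteq C^{x^\ast}_n$, contradicting $p\notin C^{x^\ast}_n$. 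Hence $A\subseteq\bigcup_n\overline{A_n}^X$ witnesses $A\in I_p$, a contradiction. (Only $\omega_1$-strong compactness of $\kappa$ is used, minimality merely recording the optimal bound; no separation axiom on $X$ is needed.)

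For (2), I would first recast the space of Theorem \ref{thm1.3} in a canonical form: a discrete set $D$ together with one extra point $p$ whose neighbourhood filter is generated by the sets $\{p\}\cup(D\setminus Z)$, with $Z$ ranging over an ideal $\mathcal I$ on $D$ that contains all finite sets. Any such $X$ is automatically normal and $T_1$; its $G_\delta$-neighbourhoods of $p$ are exactly the $\{p\}\cup(D\setminus Z)$ with $Z$ in the $\sigma$-ideal $\mathcal I^\sigma$ generated by $\mathcal I$; $X$ is countably tight iff every $\mathcal I$-positive subset of $D$ has a countable $\mathcal I$-positive subset; and $t(X_\delta)>\mu$ iff some $\mathcal I$-positive set has all its $\le\mu$-sized subsets in $\mathcal I^\sigma$. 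So the problem reduces to producing, for each cardinal $\kappa<\nu$, an ideal $\mathcal I_\kappa$ on some $\lambda_\kappa\ge\kappa^+$ with (a) every $\mathcal I_\kappa$-positive set has a countable $\mathcal I_\kappa$-positive subset, and (b) $\lambda_\kappa\notin\mathcal I_\kappa^\sigma$ while $[\lambda_\kappa]^{\le\kappa}\subseteq\mathcal I_\kappa^\sigma$; the topological sum $X=\bigoplus_{\kappa<\nu}X_{\mathcal I_\kappa}$ is then normal, countably tight, $T_1$, with $t(X_\delta)=\sup_{\kappa<\nu}t((X_{\mathcal I_\kappa})_\delta)\ge\nu$. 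To build $\mathcal I_\kappa$: since no $\omega_1$-strongly compact cardinal lies below $\nu$, $\kappa$ is not $\omega_1$-strongly compact, so there is $\lambda_\kappa\ge\kappa$ carrying no $\omega_1$-complete fine ultrafilter on $\mathcal P_\kappa(\lambda_\kappa)$; unwinding this yields a countably-indexed covering of $\mathcal P_\kappa(\lambda_\kappa)$ (equivalently, an $\omega_1$-covering matrix on $\lambda_\kappa$) admitting no countably complete diagonalization, from which $\mathcal I_\kappa$ is read off—this is the abstract form of the construction underlying Theorem \ref{thm1.3}, which is the instance carried out at $2^\omega$. The hypothesis "no weakly Mahlo cardinal $<2^\omega$" enters to guarantee that $2^\omega$ lies below all the cardinals in play and to secure the cardinal arithmetic and the absence of a reflection obstruction at $2^\omega$ that this construction needs when $2^\omega$ is replaced by larger $\kappa$.

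I expect (1) to be routine once the ideal reformulation is spotted: the $\omega_1$-complete fine ultrafilter together with countable tightness does all the work, and normality/$T_1$ never enter. The main obstacle is in (2)—arranging that property (a), which is precisely what forces countable tightness, coexists with (b), which is what forces large $G_\delta$-tightness; that is, that $\mathcal I_\kappa^\sigma$ "absorbs all $\le\kappa$-sized subsets of $\lambda_\kappa$ but not $\lambda_\kappa$ itself". This is exactly the point at which the failure of $\omega_1$-strong compactness at $\kappa$, together with the smallness of $2^\omega$, must be converted into a usable combinatorial object; once $\mathcal I_\kappa$ is in hand, normality, $T_1$, and countable tightness of the resulting space are immediate from the canonical form above, and optimality of the bound follows from part (1).
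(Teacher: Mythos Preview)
Your argument for part (1) is correct and is essentially the paper's proof, reformulated dually: where the paper picks open $O_B^n$ with $B\cap\bigcap_n O_B^n=\emptyset$, you pick their closed complements $C^x_n$. The ultrafilter/countable tightness interaction is identical.

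For part (2) there is a genuine gap. Your reduction to an ``ideal space'' $D\cup\{p\}$ with neighbourhoods $\{p\}\cup(D\setminus Z)$, $Z\in\mathcal I$, is correct and is in fact what the paper's Lemma~\ref{normal} produces \emph{a posteriori} by isolating all but one point of an auxiliary space. But the actual content of (2) is the construction of the ideal $\mathcal I_\kappa$ with properties (a) and (b), and this you do not carry out. You write that ``unwinding'' the nonexistence of an $\omega_1$-complete fine ultrafilter on $\mathcal P_\kappa\lambda$ ``yields a countably-indexed covering \dots\ from which $\mathcal I_\kappa$ is read off---this is the abstract form of the construction underlying Theorem~\ref{thm1.3}''. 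This is where the plan breaks down: the paper's proof of Theorem~\ref{thm1.3} is \emph{not} a direct ideal construction. The auxiliary space is $C_p(\mathrm{Fine}(\mathcal P_\kappa\lambda))$, countably tight by the Arhangel'ski\u\i--Pytkeev theorem because $\mathrm{Fine}(\mathcal P_\kappa\lambda)$ is compact; the witnessing set $D$ consists of continuous functions $f_\alpha$ vanishing exactly on closed $G_\delta$ sets $S^{\mathcal A^\alpha}$ indexed by countable partitions of $\mathcal P_\kappa\lambda$, and an elementary-submodel argument (Fact~\ref{1.3}) shows no $<\kappa$ of them cover $\mathrm{Fine}(\mathcal P_\kappa\lambda)$.

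More seriously, your description of how the hypothesis ``no weakly Mahlo cardinal $<2^\omega$'' enters---``to guarantee that $2^\omega$ lies below all the cardinals in play and to secure the cardinal arithmetic and the absence of a reflection obstruction''---does not match its actual role. The $C_p$ argument needs: for every countable family $\{U_n\}$ of fine ultrafilters on $\mathcal P_\kappa\lambda$ there is a countable partition of $\mathcal P_\kappa\lambda$ avoiding all of them. Lemma~\ref{1.14} reduces this to showing that $F\restriction X$ is $\omega_1$-incomplete for every $X\in F^+$, where $F=\bigcap_n U_n$. Proposition~\ref{1.5} then proves, by a \emph{generic ultrapower} argument (the c.c.c.\ poset $\mathbb P_F$, precipitousness from $\omega_1$-saturation, analysis of the critical point using Tarski's theorem on $\omega_1$-saturated $(2^\omega)^+$-complete filters), that if some such $F$ were $\omega_1$-complete there would be a weakly Mahlo cardinal $<2^\omega$. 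This is the substantive use of the hypothesis, and nothing in your sketch approximates it; without it, one cannot rule out that countably many $\omega_1$-incomplete fine ultrafilters have an $\omega_1$-complete intersection, which would block both the paper's $C_p$ argument and any attempt to ``read off'' your ideal from the covering data.
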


Thus, assuming that $2^\om$ is not so large,
we have the following equality:
\[
\text{the least $\om_1$-strongly compact}
=\sup\{t(X_\delta) \mid \text{$X$ is normal countably tight $T_1$\,}\}.
\]

Here we present some notations, definitions, and facts.

For a topological space $X$ and $A \subseteq X$, let $\overline{A}^\delta$ be the closure of $A$ in $X_\delta$.

For a filter $F$ over the set $S$ and a cardinal $\ka$,
let us say that $F$ is \emph{$\ka$-complete}
if for every family $\calA \subseteq F$ of size $<\ka$,
we have $\bigcap \calA \in F$.
A filter $F$ is \emph{$\ka$-incomplete} if
$F$ is not $\ka$-complete.

The concept of \emph{$\om_1$-strongly compact cardinal} is introduced by Bagaria and Magidor.
\begin{define}[Bagaria-Magidor \cite{BM1, BM2}]
An uncountable cardinal $\ka$ is \emph{$\om_1$-strongly compact}
if for every set $S$ and every $\ka$-complete filter $F$ over $S$,
the filter $F$ can be extended to an $\om_1$-complete ultrafilter over $S$.
%\begin{enumerate}
%\item Let $a$ be a countably infinite set,
%and $x, y \subseteq a$ are infinite subsets.
%Let us say that $x$ \emph{splits} $y$ if
%both $y \cap x$ and $y \setminus x$ are infinite.
%A family $\mathcal{R}$ of infinite subsets of $a$
%is \emph{unsplittable} if no single set splits all members of $\mathcal{R}$.
%The \emph{reaping  number} $\mathfrak{r}$ is 
%the minimal cardinality of unsplittable family.
\end{define}
Note that if $\ka$ is $\om_1$-strongly compact,
then every cardinal greater than $\ka$ is $\om_1$-strongly compact.

\begin{define}
\begin{enumerate}
\item For an uncountable cardinal $\ka$ and a set $A$,
let $\p_\ka A=\{x \subseteq A \mid \size{x}<\ka\}$.
\item A filter $F$ over $\p_\ka A$ is \emph{fine}
if for every $a \in A$, we have $\{x \in \p_\ka A \mid a \in x\} \in F$.
\end{enumerate}
\end{define}

\begin{fact}[Bagaria-Magidor \cite{BM1, BM2}]\label{1.6}
\begin{enumerate}
\item An uncountable cardinal $\ka$ is $\om_1$-strongly compact
if and only if
 for every cardinal $\la \ge \ka$,
there exists an $\om_1$-complete fine ultrafilter over $\pkl$.
\item If $\ka$ is the least $\om_1$-strongly compact,
then $\ka$ is a limit cardinal and there exists a measurable cardinal $\le \ka$.
\item It is possible that
the least $\om_1$-strongly compact is a singular cardinal.
\end{enumerate}
\end{fact}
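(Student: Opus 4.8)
The plan is to treat (1) as the technical core and to reduce (2) and (3) to it together with standard large-cardinal and forcing techniques. For the nontrivial implication of (1), the direction from fine ultrafilters to $\om_1$-strong compactness, I would argue by an ultrapower. Fix a $\ka$-complete filter $F$ over a set $S$, put $\la=\max(\ka,\size{F})$, and let $U$ be an $\om_1$-complete fine ultrafilter over $\pkl$. Since $U$ is $\om_1$-complete its ultrapower $j\colon V\to M=\ult(V,U)$ is well-founded, so I identify $M$ with its transitive collapse, and $j$ commutes with countable intersections. Fineness provides the seed $D=[\mathrm{id}]_U$ with $j''\la\subseteq D$ and $M\models\size{D}<j(\ka)$. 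Enumerating $F=\{A_\eta\mid\eta<\la\}$ (with repetitions) and writing $j(F)=\langle j(F)_\xi\mid\xi<j(\la)\rangle$, the intersection $\bigcap_{\xi\in D\cap j(\la)}j(F)_\xi$ is an intersection of fewer than $j(\ka)$ many members of the $j(\ka)$-complete filter $j(F)$, hence nonempty in $M$, and since $j''\la\subseteq D$ it is contained in $\bigcap_{\eta<\la}j(A_\eta)$. Choosing $s$ in this set and setting $D^\ast=\{Z\subseteq S\mid s\in j(Z)\}$, I obtain an ultrafilter (as $s\in j(S)$ decides each $j(Z)$) that extends $F$ (since $s\in j(A_\eta)$) and is $\om_1$-complete (since $j$ preserves countable intersections); this is the required extension.

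For the forward implication of (1) the clean case is $\ka$ regular: the fine filter on $\pkl$ generated by the cones $\{x\mid A\subseteq x\}$ for $A\in\pkl$ is then $\ka$-complete and proper, and any $\om_1$-complete ultrafilter extending it, which exists by $\om_1$-strong compactness, is automatically fine. The main obstacle is that when $\ka$ is singular there is \emph{no} $\ka$-complete fine filter on $\pkl$ at all: cones over $<\ka$ many sets whose union has size $\ka$ intersect to the empty set. I would circumvent this by extracting from the definition an $\om_1$-complete embedding $j\colon V\to M$ carrying a seed $D\in M$ with $j''\la\subseteq D$ and $M\models\size{D}<j(\ka)$, and then \emph{reading off} the ultrafilter as $U=\{Y\subseteq\pkl\mid (D\cap j(\la))\in j(Y)\}$; this $U$ is fine because $j''\la\subseteq D$ and is only $\om_1$-complete, which is all that is needed. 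Producing such an embedding for singular $\ka$ is the delicate point and is where the finer Bagaria--Magidor analysis is essential.

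For (2), the existence of a measurable $\le\ka$ follows from (1): take any fine $\om_1$-complete ultrafilter over $\pkl$ with $\la\ge\ka$ large and form $j\colon V\to M$. The seed forces $\crit(j)\le\ka$ (otherwise $\crit(j)>\ka$, so $j(\ka)=\ka$, while $j''\la\subseteq D$ has size $\la\ge\ka$, and $M\models\size{D}<j(\ka)=\ka$ gives $\size{D}<\ka$ in $V$ as well, a contradiction), and the critical point of any $\om_1$-complete embedding is measurable, since the derived ultrafilter on $\crit(j)$ is $\crit(j)$-complete. For the assertion that the least such $\ka$ is a limit cardinal I would argue by minimality: assuming $\ka=\mu^+$, I would reflect the fine ultrafilter through the measurable obtained above to produce an $\om_1$-strongly compact cardinal strictly below $\ka$, contradicting leastness. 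The genuine difficulty here is the one-step gap between $\mu$- and $\mu^+$-completeness, and this is the second place where the Bagaria--Magidor machinery is needed.

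Finally, (3) is a consistency statement, which I would establish by forcing: starting in a model with a suitable $\om_1$-strongly compact (indeed strongly compact) cardinal having a measurable below it, I would apply a Prikry--Magidor-type forcing to change the cofinality of the relevant cardinal to $\om$, singularizing it while preserving its $\om_1$-strong compactness and arranging that no smaller cardinal becomes $\om_1$-strongly compact. The main obstacle is verifying that $\om_1$-strong compactness survives the singularizing forcing and that minimality is maintained; for this I would follow the construction of Bagaria and Magidor.
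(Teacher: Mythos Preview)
The paper does not prove this statement at all: it is presented as a \emph{Fact} attributed to Bagaria--Magidor \cite{BM1,BM2} and is simply quoted without argument. There is therefore no ``paper's own proof'' to compare your proposal against; you have attempted to supply what the author deliberately omitted as background.

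As for the proposal itself, your sketch of the backward direction of (1) via a seeded ultrapower is standard and essentially correct. For the forward direction you correctly isolate the genuine obstacle at singular $\ka$: the club filter on $\pkl$ generated by sets of the form $\{x:A\subseteq x\}$ for $A\in\pkl$ is $\ka$-complete only when $\ka$ is regular, so the naive extension argument breaks down. Your workaround (``extract from the definition an $\om_1$-complete embedding carrying a seed'') is not a proof but a restatement of what needs to be done, and you rightly flag this as the place where the Bagaria--Magidor analysis is essential. Similarly, your argument for (2) that the least $\om_1$-strongly compact is a limit cardinal is only a gesture (``reflect the fine ultrafilter through the measurable''), and (3) is an outline of a forcing construction rather than an argument. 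None of this is wrong, but it is a roadmap rather than a proof, and in each of the three places you defer to Bagaria--Magidor you are deferring the actual content. Since the paper itself does exactly the same thing---citing the result wholesale---your proposal already exceeds what the paper provides.
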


Now we give the proof of (1) in Theorem \ref{thm1.4}.
The proof is essentially the same to in 
Dow-Juh\'asz-Soukup-Szentmikl\'ossy-Weiss \cite{DJSSW},
but we give it for the completeness.
\begin{prop}
Let $\ka$ be the least $\om_1$-strongly compact.
Then for every countably tight topological space $X$,
$A \subseteq X$, and $p \in \overline{A}^\delta$,
there is $B \subseteq A$ with $\size{B}<\ka$ and $p \in \overline{B}^\delta$.
Hence $t(X_\delta) \le \ka$.
\end{prop}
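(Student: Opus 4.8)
The plan is to argue by contradiction: suppose $X$ is countably tight, $A\subseteq X$, $p\in\overline{A}^\delta$, but $p\notin\overline{B}^\delta$ for every $B\subseteq A$ with $\size{B}<\ka$. The natural object to build is a filter witnessing this failure. For each such $B$, fix a $G_\delta$-neighborhood $U_B$ of $p$ with $U_B\cap B=\emptyset$; then set $S=A$ and let $F$ be the filter over $A$ generated by the sets $A\setminus B$ for $B\in[A]^{<\ka}$ together with the sets $U\cap A$ where $U$ ranges over $G_\delta$-neighborhoods of $p$. The point is that $F$ is $\ka$-complete: an intersection of fewer than $\ka$ many generators $A\setminus B_i$ is $A\setminus\bigcup B_i$ and $\bigcup B_i$ still has size $<\ka$ since $\ka$ is a limit cardinal by Fact \ref{1.6}(2) (indeed $\ka$ is regular-or-singular but a limit cardinal, and $<\ka$ many sets each of size $<\ka$ union to size $<\ka$ when $\ka$ is a limit — one should be slightly careful here and note $\ka$ is a limit cardinal so $\ka$-completeness of the "co-small" part holds); and the $G_\delta$-part contributes only the single requirement that finite — hence $<\ka$ — intersections of $G_\delta$ sets containing $p$ are $G_\delta$ sets containing $p$, which is immediate. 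One must also check $F$ is a proper filter, i.e. $(U\cap A)\setminus B\neq\emptyset$ for every $G_\delta$-neighborhood $U$ of $p$ and every $B\in[A]^{<\ka}$: this is exactly the assumption $p\in\overline{A}^\delta$ combined with $p\notin\overline{B}^\delta$, since if $(U\cap A)\subseteq B$ then $U$ is a $G_\delta$-neighborhood of $p$ meeting $A$ only inside $B$, and then $p\in\overline{A\cap U}^\delta\subseteq\overline{B}^\delta$, a contradiction (using that $\overline{A}^\delta$ is the $\delta$-closure and every $\delta$-neighborhood of $p$ contains a $G_\delta$ one meeting $A$).

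Next, since $\ka$ is $\om_1$-strongly compact and $F$ is a $\ka$-complete filter over $S=A$, extend $F$ to an $\om_1$-complete ultrafilter $U^*$ over $A$. Now use countable tightness of $X$, not $X_\delta$: I want to produce a countable set $C\subseteq A$ with $p\in\overline{C}$ in the original topology, and then run a diagonalization against $U^*$. The cleanest route is to form the ultrapower or, more elementarily, to observe that for each open (in $X$) neighborhood $V$ of $p$ the set $V\cap A$ need not be in $U^*$; but the key is that $\om_1$-completeness lets us handle countably many $G_\delta$-neighborhoods at once. Concretely: enumerate a countable family of $G_\delta$-neighborhoods $\{W_n:n<\om\}$ of $p$ that forms a neighborhood base at $p$ in $X_\delta$ — wait, $p$ need not have a countable $\delta$-base, so instead I proceed as follows. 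Since $p\in\overline{A}^\delta$, for a decreasing sequence of $G_\delta$-sets one gets points of $A$ inside; the honest argument is the one in \cite{DJSSW}: using that $t(X)=\om$, inside $\overline{A}$ (closure in $X$) choose a countable $C\subseteq A$ with $p\in\overline C$. Then $C\notin U^*$ would follow from $F\subseteq U^*$ only if $C\in[A]^{<\ka}$, which it is (countable $<\ka$ since $\ka>\om$), so $A\setminus C\in F\subseteq U^*$, hence $C\notin U^*$ — but we have not yet derived a contradiction from merely having such a $C$. The real contradiction must come from iterating: one builds, by recursion on $\alpha<\ka$ (or on $\om_1$, using $\om_1$-completeness), an increasing chain of countable subsets whose union $B^*\in[A]^{<\ka}$ satisfies $p\in\overline{B^*}^\delta$, contradicting the assumption.

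The mechanism for that recursion is the following: given a $\delta$-neighborhood basis is unavailable, but given the ultrafilter $U^*$ we know every $G_\delta$-neighborhood $U$ of $p$ has $U\cap A\in U^*$; and $A\setminus B\in U^*$ for all small $B$. So whenever we have built a countable $B_\alpha\subseteq A$, we have $p\notin\overline{B_\alpha}^\delta$, so there is a $G_\delta$-neighborhood $U_\alpha$ of $p$ with $U_\alpha\cap B_\alpha=\emptyset$; since $U_\alpha\cap A\in U^*$ it is nonempty and we may, using $t(X)=\om$ applied to $p\in\overline{U_\alpha\cap A}$ (which holds because $p\in\overline A^\delta$ implies $p\in\overline{U_\alpha\cap A}$ even in the $X$-topology after intersecting with the open sets defining $U_\alpha$ — here one threads through the countably many open sets $U_\alpha=\bigcap_n O_{\alpha,n}$ and uses countable tightness plus $\om$-completeness... this is the subtle point), pick a countable $B_{\alpha+1}\supseteq B_\alpha$ meeting $U_\alpha$. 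Taking $B^*=\bigcup_{\alpha<\om_1}B_\alpha$ and using $\om_1$-completeness of $U^*$ to diagonalize the $U_\alpha\cap A$'s, one shows $p\in\overline{B^*}^\delta$ with $\size{B^*}\le\om_1<\ka$, the desired contradiction. The last clause of the proposition, $t(X_\delta)\le\ka$, is then immediate from the definition of tightness. \emph{The main obstacle} is the interface between the two topologies: countable tightness is a hypothesis about $X$, but we need a conclusion about $X_\delta$, whose basic open sets are countable intersections of $X$-open sets. Making the recursive step legitimate — extracting, from $p\in\overline{A}^\delta$ and $t(X)=\om$, a countable set meeting a prescribed $G_\delta$-neighborhood while staying inside the ultrafilter — is where the $\om_1$-completeness of $U^*$ is essential and where care is required; the $\ka$-completeness of $F$ and $\om_1$-strong compactness are used only to manufacture $U^*$ in the first place.
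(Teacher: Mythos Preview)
Your approach has two genuine gaps, and the first is fatal to the strategy as written.

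\textbf{The filter $F$ is not $\ka$-complete.} You need $\ka$-completeness of $F$ to invoke $\om_1$-strong compactness, but neither family of generators is closed under $<\ka$-ary intersections. For the $G_\delta$-part, you write that it ``contributes only the single requirement that finite --- hence $<\ka$ --- intersections of $G_\delta$ sets containing $p$ are $G_\delta$ sets'': this is simply false. $\ka$-completeness requires closure under intersections of \emph{any} family of size $<\ka$, and an intersection of $\aleph_1$ many $G_\delta$-neighborhoods of $p$ is not a $G_\delta$-set, so the intersection of the corresponding generators need not lie in $F$. For the co-small part, you appeal to $\ka$ being a limit cardinal (Fact~\ref{1.6}(2)), but Fact~\ref{1.6}(3) says the least $\om_1$-strongly compact can be \emph{singular}; in that case a union of $<\ka$ many sets of size $<\ka$ can have size $\ka$, and the co-$<\ka$ filter is not $\ka$-complete. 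So you cannot manufacture the $\om_1$-complete ultrafilter $U^*$ on $A$ that the rest of the argument hangs on. The paper sidesteps this entirely by working not with an ultrafilter on $A$ but with an $\om_1$-complete \emph{fine} ultrafilter $U$ on $\pkl$, whose existence is exactly the content of Fact~\ref{1.6}(1); the ``co-small'' structure is encoded by fineness rather than by completeness.

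\textbf{The recursion does not deliver a contradiction.} Even granting your $U^*$, the transfinite construction you sketch does not show $p\in\overline{B^*}^\delta$. You build $B_{\alpha+1}$ to meet one specific $G_\delta$-neighborhood $U_\alpha$ (chosen to miss $B_\alpha$), but to get $p\in\overline{B^*}^\delta$ you would need $B^*$ to meet \emph{every} $G_\delta$-neighborhood of $p$, and nothing ties an arbitrary such neighborhood to the $U_\alpha$'s. The vague ``diagonalization of the $U_\alpha\cap A$'s using $\om_1$-completeness'' does not close this gap. Compare the paper's argument: it uses the fine ultrafilter on $\pkl$ to partition $A$ into countably many pieces $A_n$ and then uses countable tightness of $X$ (not $X_\delta$) to show $p\notin\overline{A_n}$ for each $n$, which directly gives $p\notin\overline{A}^\delta$. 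The interface between the two topologies that you correctly flag as the main obstacle is handled there by a single clean application of $t(X)=\om$ to a countable $C\subseteq A_n$, combined with $\om_1$-completeness of $U$ to pick one $B\in\pkl$ working simultaneously for all of $C$.
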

\begin{proof}
We may assume that $A=\la$ for some cardinal $\la \ge \ka$.
By Fact \ref{1.6},
there is an $\om_1$-complete fine ultrafilter $U$ over $\pkl$.

Suppose to the contrary that 
$p \notin \overline{B}^\delta$ for every $B \subseteq \la$ with $\size{B}<\ka$.
For $B \subseteq \la$ with $\size{B}<\ka$,
there are open neighborhoods $O^n_B$ ($n<\om$) of $p$ with $B \cap \bigcap_{n<\om} O^n_B=\emptyset$.
Since $U$ is $\om_1$-complete,
for each $\alpha<\la$,
there is $n<\om$
with
$\{B \in \pkl \mid \alpha \in B \setminus \bigcap_{i \le n} O_B^i\} \in U$.
For $n<\om$,
let $A_n$ be the set of all $\alpha<\la$ with
$\{B \in \pkl \mid \alpha \in B \setminus \bigcap_{i \le n} O_B^i\} \in U$.
We have $\la=\bigcup_{n<\om} A_n$.
On the other hand we have $p \notin \overline{A_n}$;
If $p \in \overline{A_n}$,
there is a countable $C \subseteq A_n$ with
$p \in \overline{C}$.
Since $U$ is $\om_1$-complete,
we can find $B \in \pkl$ with
$\alpha \in B \setminus \bigcap_{i \le n} O_B^i$ for every $\alpha \in C$,
this means that $p \in \bigcap_{i \le n} O_B^i$ but $C \cap \bigcap_{i\le n}O_B^i=\emptyset$, this is impossible.
Thus $p \notin \overline{A_n}$,
and this immediately implies  that $p \notin \overline{A}^\delta$.
\end{proof}

\section{Construction of the spaces}

For the sake of constructing our spaces,
we use the function spaces.
Let us recall some definitions and basic facts.
For a Tychonoff space $X$, let $C(X)$ be the 
set of all continuous functions from $X$ into the real line $\bbR$.
$C_p(X)$ is the space $C(X)$ endowed with the point-wise convergence,
that is, the topology of $C_p(X)$ is 
generated by the family
$\{ V(x_0,\dotsc, x_n, O_0,\dotsc, O_n) \mid x_0,\dotsc, x_n \in X, O_0,\dotsc, O_n \subseteq \bbR$ are open in $\bbR\}$
where 
$V(x_0,\dotsc, x_n, O_0,\dotsc, O_n)$ is the set of all
$f \in C(X)$ with $f(x_i) \in O_i$ for every $i \le n$.

\begin{fact}[Arhangel'ski\u \i-Pytkeev \cite{A,P}]\label{1.1}
Let $X$ be a Tychonoff space, and $\nu$ a cardinal.
Then $L(X^n) \le \nu$ for every $n<\om$
if and only if
$t(C_p(X)) \le \nu$.
In particular,
each finite product of $X$ is Lindel\"of if and only if $C_p(X)$ is countably tight.
\end{fact}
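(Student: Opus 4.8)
This is classical; I would prove the two implications separately. For \emph{$L(X^n)\le\nu$ for all $n$ $\Rightarrow$ $t(C_p(X))\le\nu$}: let $A\subseteq C_p(X)$ and $f\in\overline A$. Since $C_p(X)$ is a topological group, translating by $f$ lets me assume $f$ is the zero function and $0\notin A$. For $g\in A$, $n<\om$ and $k\ge1$, the set
\[
U(g,n,k)=\{(x_1,\dots,x_n)\in X^n : |g(x_i)|<1/k\text{ for every }i\le n\}
\]
is open in $X^n$, and $0\in\overline A$ makes $\{U(g,n,k):g\in A\}$ a cover of $X^n$ (each point is hit by some $g$ that is of absolute value $<1/k$ there). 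By $L(X^n)\le\nu$ choose $B_{n,k}\in[A]^{\le\nu}$ with $\{U(g,n,k):g\in B_{n,k}\}$ still covering $X^n$, and let $B=\bigcup_{n<\om,\,k\ge1}B_{n,k}$, of size $\le\nu$. Then $0\in\overline B$: a basic neighbourhood $V(x_1,\dots,x_m,O_1,\dots,O_m)$ of $0$ contains $(-1/k,1/k)$ inside each $O_i$ for $k$ large, and some $g\in B_{m,k}$ has $(x_1,\dots,x_m)\in U(g,m,k)$, so $g\in B\cap V(x_1,\dots,x_m,O_1,\dots,O_m)$.

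For the converse, \emph{$t(C_p(X))\le\nu$ $\Rightarrow$ $L(X^n)\le\nu$ for all $n$}, I would pass through $\om$-covers. Call a family $\mathcal U$ of proper open subsets of $X$ an \emph{$\om$-cover} if every finite subset of $X$ lies in some member. The first step is that $t(C_p(X))\le\nu$ implies every open $\om$-cover of $X$ has a subfamily of size $\le\nu$ that is still an $\om$-cover. Refining with the Tychonoff property, assume each $U\in\mathcal U$ is a cozero set $U=\{x:h_U(x)>0\}$ with $0\le h_U\le1$, and set $A=\{\max(0,1-kh_U) : U\in\mathcal U \text{ and } k\ge1\}\subseteq C_p(X)$. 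For finite $F\subseteq X$ there is $U\in\mathcal U$ with $F\subseteq U$, and $\max(0,1-kh_U)$ vanishes on $F$ once $k$ is large, so $0\in\overline A$. Take $B\in[A]^{\le\nu}$ with $0\in\overline B$, and let $\mathcal V\subseteq\mathcal U$ collect the sets $U$ appearing among the indices of members of $B$, so $|\mathcal V|\le\nu$. Given finite $F\subseteq X$, applying $0\in\overline B$ to the neighbourhood of $0$ forcing all values on $F$ to lie in $(-1,1)$ produces some $\max(0,1-kh_U)\in B$ with values $<1$ on $F$; since $\max(0,1-kh_U)\equiv1$ off $U$, this forces $F\subseteq U\in\mathcal V$, so $\mathcal V$ is the desired $\om$-subcover.

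It remains to tie the $\om$-cover property to the Lindel\"of numbers $L(X^n)$. One direction, not needed here, is straightforward: if $L(X^m)\le\nu$ for all $m$, then for any open $\om$-cover $\mathcal U$ each $\{U^m:U\in\mathcal U\}$ covers $X^m$, so $L(X^m)\le\nu$ gives $\mathcal V_m\in[\mathcal U]^{\le\nu}$ covering $X^m$ and $\bigcup_m\mathcal V_m$ is a $\le\nu$-sized $\om$-subcover. The direction I need is the reverse---the $\om$-cover property forces $L(X^n)\le\nu$ for every $n$---and I would prove it by induction on $n$: the base case $n=1$ is the $\om$-cover property applied to the family of all finite unions of members of a given open cover of $X$, and for the step, given an open cover of $X^{n+1}=X^n\times X$ by boxes, each slice $X^n\times\{x\}$ is covered by $\le\nu$ boxes (each carrying $x$ in its $X$-coordinate) by the inductive hypothesis, after which one assembles these choices into an auxiliary open $\om$-cover of $X$ and applies the reflection of the previous paragraph to it, yielding a single family of $\le\nu$ boxes covering $X^{n+1}$. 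I expect this last reflection to be the genuine obstacle: covering a finite power of $X$ is not a coordinatewise condition, so the auxiliary $\om$-cover has to be built carefully so that no essential information is lost when it is thinned out; the remaining ingredients---the cozero refinements, the group translation, the cardinal bookkeeping---are routine.
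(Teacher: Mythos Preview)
The paper does not prove this statement at all: it is recorded as a \emph{Fact} with citations to Arhangel'ski\u\i\ and Pytkeev, and is used as a black box. So there is no ``paper's proof'' to compare against; below I evaluate your argument on its own merits.

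Your forward implication and your reduction of the converse to the $\omega$-cover statement (every open $\omega$-cover of $X$ has an $\omega$-subcover of size $\le\nu$) are both correct and standard. The gap is exactly where you flag it: the inductive step from $L(X^n)\le\nu$ to $L(X^{n+1})\le\nu$ does not go through as written. After covering each slice $X^n\times\{x\}$ by $\le\nu$ boxes, there is in general no open $V\ni x$ (let alone one containing a prescribed finite set) with $X^n\times V$ still covered by those boxes, so no ``auxiliary open $\omega$-cover of $X$'' materializes from the slice data. The induction hypothesis $L(X^n)\le\nu$ simply does not produce the thickening you need.

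The clean fix bypasses induction. Given an open cover $\mathcal O$ of $X^n$, refine to basic boxes and call an open $U\subseteq X$ \emph{good} if $U^n$ is covered by finitely many members of $\mathcal O$. For a finite $F\subseteq X$, cover each point of $F^n$ by a single box $B_{\vec a}=W^{\vec a}_1\times\cdots\times W^{\vec a}_n\in\mathcal O$, set $U_a=\bigcap\{W^{\vec a}_i:\vec a\in F^n,\ a_i=a\}$ for $a\in F$, and let $U=\bigcup_{a\in F}U_a$; a direct check shows $U^n\subseteq\bigcup_{\vec a\in F^n}B_{\vec a}$, so $U$ is good and contains $F$. Thus (unless $X$ itself is good and we are done) the good sets form an $\omega$-cover; pick a $\le\nu$-sized $\omega$-subcover $\{U_\beta\}$, note that $\{U_\beta^n\}$ covers $X^n$, and unravel the finitely many boxes witnessing goodness of each $U_\beta$ to obtain a subcover of $\mathcal O$ of size $\le\nu$. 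With this replacement, your proof is complete.
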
 

\begin{prop}\label{1.2}
Let $\ka$ be an uncountable cardinal and $\la \ge \ka$ a cardinal.
Suppose there is no $\om_1$-complete fine ultrafilter over $\pkl$.
In addition we suppose that, for every countable family $\{U_n \mid n<\om\}$ of
fine ultrafilters over $\pkl$, 
there is a countable partition $\calA$ of $\pkl$
such that $A \notin U_n$ for every $A \in \calA$ and $n<\om$.
Then there is a countably tight Tychonoff space $X$ with $t(X_\delta) \ge \ka$.
\end{prop}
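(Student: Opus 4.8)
The plan is to realize $X$ as the function space $C_p(Y)$ for a suitably chosen Tychonoff space $Y$, and to use Fact~\ref{1.1} to transfer the combinatorial hypotheses on $\pkl$ into topological statements about $C_p(Y)$. The natural choice is to take $Y = \pkl$ itself, equipped with a topology making $Y$ a $P$-space-like object whose finite powers are Lindel\"of exactly because there is no $\om_1$-complete fine ultrafilter — more precisely, one topologizes $\pkl$ so that the fine ultrafilters correspond to its points at infinity in a one-point-type or Stone-space-type compactification, and the first hypothesis (no $\om_1$-complete fine ultrafilter) is what makes $Y^n$ Lindel\"of for all $n<\om$, hence $C_p(Y)$ countably tight by Fact~\ref{1.1}. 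The second hypothesis (the countable-partition-avoiding property for countable families of fine ultrafilters) is the one tailored to force $t((C_p(Y))_\delta) \ge \ka$: if the tightness of the $G_\delta$-modification were $<\ka$, one would obtain a witnessing set $B$ of size $<\ka$, and the closure of $B$ in the $G_\delta$-topology would be controlled by countably many basic $G_\delta$-neighborhoods; dualizing through the evaluation pairing between $C_p(Y)$ and $Y$, these $G_\delta$-sets produce a countable family of fine ultrafilters on $\pkl$, and the avoiding partition exactly contradicts the membership of $p$ in $\overline{B}^\delta$.

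Concretely, I would proceed as follows. First, fix the underlying set $\pkl$ and define a topology $\tau$ on it; the candidate is to declare a set open iff it is in every fine ultrafilter that "should" converge, or — cleaner — to build $Y$ as a subspace of $\{0,1\}^{\pkl}$ or of a product indexed by $\la$, so that the points of $Y$ are (essentially) the elements $a\in A=\la$ coded via $\{x\in\pkl : a\in x\}$, together with enough limit points. The key first step is to verify $L(Y^n)\le$ (something $<\om_1$-related) for each $n$: given an open cover of $Y^n$ with no countable subcover, one extracts a $\ka$-complete filter on a power of $\pkl$ concentrating on the uncovered part, fineness comes from the coding, and completeness from the definition of the topology; this filter would extend to an $\om_1$-complete fine ultrafilter, contradiction. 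Then $t(C_p(Y))=\om$ by Fact~\ref{1.1}. Second, analyze $(C_p(Y))_\delta$: a basic neighborhood of $p\in C_p(Y)$ in the $G_\delta$-topology is a countable intersection of sets $V(y_0,\dots,y_n,O_0,\dots,O_n)$; since $Y$ has underlying set $\pkl$, each finite tuple of points of $Y$ with rational constraints corresponds to a subset of $\pkl$, and a $G_\delta$-set around $p$ corresponds to countably many such, hence to a countable family of subsets of $\pkl$.

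Third — and this is the heart — pick $A\subseteq C_p(Y)$ of size $\la$ (indexed by $\la$, using characteristic-function-type elements) and a point $p\in\overline{A}^\delta$ such that no small subset works. Suppose toward a contradiction $B\subseteq A$, $|B|<\ka$, $p\in\overline{B}^\delta$. For each $a\in A\setminus B$ choose a $G_\delta$-neighborhood of $p$ missing $a$; this assigns to each $a$ a countable family of basic clopen-type sets in $\pkl$, and by an $\om_1$-style pigeonhole we get, for each $a$, a single set $S_a\in\pkl$-combinatorics separating $a$ from $p$. Now the collection of the "$p$-side" sets generates, on $\pkl$, either directly a $\ka$-complete filter (extendible to the forbidden ultrafilter) or a countable family of fine ultrafilters $U_0,\dots,U_n,\dots$; applying the second hypothesis yields a countable partition $\calA$ of $\pkl$ with no piece in any $U_n$. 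Translating the piece containing the relevant index back into $C_p(Y)$ exhibits a single basic $G_\delta$-neighborhood of $p$ that misses all of $B$ (because the partition-avoidance kills membership), contradicting $p\in\overline{B}^\delta$. Hence $t((C_p(Y))_\delta)\ge|A|=\la\ge\ka$.

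The main obstacle I expect is getting the topology on $Y=\pkl$ exactly right so that \emph{both} transfer directions work simultaneously: the topology must be coarse enough (have few enough open sets) that finite powers are Lindel\"of under the no-ultrafilter hypothesis, yet the $G_\delta$-modification of $C_p(Y)$ must be fine enough that the second, partition-style hypothesis is genuinely what is needed to defeat small tightness witnesses. Reconciling these — i.e.\ showing that the only obstruction to $t((C_p(Y))_\delta)<\ka$ is precisely a failure of the hypothesized partition property — is the delicate bookkeeping step; the Lindel\"of computation for $Y^n$ and the verification that $C_p(Y)$ is Tychonoff and countably tight are comparatively routine given Fact~\ref{1.1}.
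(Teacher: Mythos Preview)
Your instinct to use $C_p$ of a space built from $\pkl$ is right, and you correctly identify Fact~\ref{1.1} as the bridge to countable tightness. But the proposal has a genuine gap: you never commit to a space $Y$, and the roles you assign to the two hypotheses are essentially inverted relative to what actually works.

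The paper takes $Y=\mathrm{Fine}(\pkl)$, the closed subspace of $\beta(\pkl)$ (with $\pkl$ discrete) consisting of the fine ultrafilters. This is \emph{compact} Hausdorff, so ``$Y^n$ Lindel\"of'' is trivial and Fact~\ref{1.1} gives countable tightness immediately. Your plan to topologize $\pkl$ itself and then argue that the no-$\om_1$-complete-fine-ultrafilter hypothesis forces $Y^n$ Lindel\"of is not needed and would be hard to carry out. More importantly, the two hypotheses are used as follows. Each countable partition $\calA$ of $\pkl$ gives a closed $G_\delta$ set $S^{\calA}=\{U\in\mathrm{Fine}(\pkl): A\notin U\text{ for all }A\in\calA\}$; the first hypothesis says these cover $\mathrm{Fine}(\pkl)$. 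One takes continuous $f_\alpha:\mathrm{Fine}(\pkl)\to[0,1]$ with $S^{\calA^\alpha}=f_\alpha^{-1}\{0\}$, sets $D=\{f_\alpha\}$ and $g\equiv 0$. The partition hypothesis is what shows $g\in\overline{D}^\delta$: a basic $G_\delta$-neighborhood of $g$ is determined by countably many points $U_n\in\mathrm{Fine}(\pkl)$, and the hypothesis hands you a single $\alpha$ with every $U_n\in S^{\calA^\alpha}$, so $f_\alpha$ lies in that neighborhood. This is the opposite of what you wrote: the partition property establishes membership in the $G_\delta$-closure, it does not defeat small witnesses.

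The step you are missing entirely is the lower bound: why can no $E$ of size $<\ka$ satisfy $g\in\overline{\{f_\alpha:\alpha\in E\}}^\delta$? If it did, then for every $U\in\mathrm{Fine}(\pkl)$ the $G_\delta$-neighborhood $\{h:h(U)=0\}$ would contain some $f_\alpha$ with $\alpha\in E$, i.e.\ $U\in S^{\calA^\alpha}$, so $\{S^{\calA^\alpha}:\alpha\in E\}$ would cover $\mathrm{Fine}(\pkl)$. The key lemma (Fact~\ref{1.3}, proved by taking an elementary submodel $M\prec H_\theta$ of size $<\ka$ containing $E$ and observing that $M\cap\la$ picks one block $A_\alpha$ from each $\calA^\alpha$, so a fine ultrafilter through $\{A_\alpha:\alpha\in E\}$ avoids every $S^{\calA^\alpha}$) says this is impossible. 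Your ``third step'' --- choosing for each $a\in A\setminus B$ a $G_\delta$-neighborhood of $p$ missing $a$, then extracting a countable family of ultrafilters and applying the partition hypothesis --- does not lead to a contradiction with $p\in\overline{B}^\delta$; the logical direction is wrong, and the countable family of ultrafilters you hope to produce from a set $B$ of size $<\ka$ never materializes.
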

\begin{proof}
Identifying $\pkl$ as a discrete space,
let $\mathrm{Fine}(\pkl)$ be the closed subspace of the Stone-\v Cech compactification $\beta(\pkl)$ consisting of all
fine ultrafilters over $\pkl$.
Let $X=C_p(\mathrm{Fine}(\pkl))$.
Since $\mathrm{Fine}(\pkl)$ is compact Hausdorff,
each finite product of 
$\mathrm{Fine}(\pkl)$ is compact.
Hence $X$ is countably tight by Fact \ref{1.1}.
We shall show that $t(X_\delta) \ge \ka$.

Let $\{\calA^\alpha \mid \alpha<\nu\}$ be an enumeration of all countable partitions of $\pkl$.
For $\alpha<\nu$, 
let $S^{\calA^\alpha}=\{U \in \mathrm{Fine}(\pkl) \mid A \notin U$ for every $A \in \calA^\alpha\}$.
$S^{\calA^\alpha}$ is a closed $G_\delta$-subset of $\mathrm{Fine}(\pkl)$.
Since there is no $\om_1$-complete fine ultrafilter over $\pkl$,
the family $\{S^{\calA^\alpha} \mid \alpha<\nu\}$ is a cover of
$\mathrm{Fine}(\pkl)$.
Furthermore, by our assumption, 
for every $U_n \in \mathrm{Fine}(\pkl)$ ($n<\om$)
there is $\alpha<\nu$ such that $U_n \in S^{\calA^\alpha}$ for every $n<\om$.

We use the following fact:
\begin{fact}[Usuba \cite{Usuba}]\label{1.3}
If $\{S^{\calA^\alpha} \mid \alpha \in E\}$ is a cover of $\mathrm{Fine}(\pkl)$ for some $E \subseteq \nu$, then $\size{E} \ge \ka$.
\end{fact}
\begin{proof}[Sketch of the proof]
Take $E \subseteq \nu$ with size $<\ka$. 
Take a sufficiently large regular $\theta$,
and take $M \prec H_\theta$ containing all relevant objects such that
$\size{M}<\ka$ and $E \subseteq M$.
For each $\alpha \in E$, there is a unique $A_\alpha \in \calA^\alpha$
with $M \cap \ka \in A_\alpha$.
By the elementarity, we have that 
for every $\alpha_0,\dotsc,\alpha_n \in E$ and $\beta_0, \dotsc, \beta_m<\la$,
the family $\{x \in \pkl \mid x \in A_{\alpha_i}$ for every $i<n$ and 
$\beta_i \in x$ for every $i<m\}$ is non-empty.
Thus we can take a fine ultrafilter $U$ over $\pkl$
such that $U \notin S^{\calA^\alpha}$ for every $\alpha \in E$.
Hence $\{S^{\calA^\alpha} \mid \alpha \in E\}$ is not a cover.
\end{proof}

For $\alpha<\nu$,
since $S^{\calA^\alpha}$ is a closed $G_\delta$-subset of the compact Hausdorff space $\mathrm{Fine}(\pkl)$,
there is a continuous map $f_\alpha:\mathrm{Fine}(\pkl) \to [0,1]$
such that $S^{\calA^\alpha}=f_\alpha^{-1}\{0\}$.
Let $D=\{f_\alpha \mid \alpha<\nu\}$.
Let $g:\mathrm{Fine}(\pkl) \to \{0\}$ be the constant function.
We shall show that $g$ and $D$ witness $t(X_\delta) \ge \ka$.

First we check that $g \in \overline{D}^\delta$.
Take an open neighborhood $O$ of $g$ in $X_\delta$.
By the definition of the topology of $X_\delta$,
there are  $U_n \in \mathrm{Fine}(\pkl)$ $(n<\om)$
such that $\{h \in X \mid h(U_n)=0$ for $n<\om\} \subseteq O$.
By the assumption,
there is $\alpha<\nu$ such that
$U_n \in S^{\calA^\alpha}$ for every $n<\om$.
Then $f_\alpha(U_n)=0$ for $n<\om$,
hence $f_\alpha\in D \cap O$.

Finally we show that if a  set $E \subseteq \nu$ has cardinality $<\ka$,
then $g \notin \overline{ \{f_\alpha \mid \alpha \in E\}}^\delta$,
which means that $t(X_\delta) \ge \ka$.
Suppose to the contrary that $g \in \overline{ \{f_\alpha \mid \alpha \in E\}}^\delta$.
For each $U \in \mathrm{Fine}(\pkl)$,
the set $\{h \in C_p(X) \mid h(U)=0\}$ is an open neighborhood of $g$ in $X_\delta$.
So we can pick $\alpha \in E$ with $f_\alpha(U)=0$.
Since $S^{\calA^\alpha}=f_\alpha^{-1}\{0\}$,
we have $U \in S^{\calA^\alpha}$.
This shows that $\{S^{\calA^\alpha}\mid \alpha \in E\}$ is a cover of $\mathrm{Fine}(\pkl)$,
contradicting to Fact \ref{1.3}.
\end{proof}

\section{On the assumptions of Proposition \ref{1.2}}
Now let us discuss 
when the assumptions of Proposition \ref{1.2}
hold.

For a filter $F$ over the set $S$,
let $F^+=\{X \in \p(S) \mid S \setminus X \notin F\}$.
$F^+$ is the complement of the dual ideal of $F$. 
An element of $F^+$ is called an \emph{$F$-positive set}.
For $X \in F^+$, let $F \restriction X=\{Y \subseteq S \mid Y \cup (S \setminus X) \in F\}$.
$F \restriction X$ is  the filter over $S$ generated by $F \cup \{X\}$.

\begin{lemma}\label{1.14}
Let $S$ be an uncountable set,
and $\{U_n \mid n<\om\}$ a family of ultrafilters over $S$.
Let $F=\bigcap_{n<\om} U_n$.Then the following are equivalent:
\begin{enumerate}
\item There is a countable partition $\calA$ of $S$ such that
$A \notin U_n$ for every $A \in \calA$ and $n<\om$.
%\item There is a descending sequence $\seq{B_i \mid i<\om}$ of subsets of $S$
%such that $B_i \in F$ and $\bigcap_{i<\om} B_i =\emptyset$.
\item For every $X \in F^+$, the filter $F \restriction X$ is $\om_1$-incomplete.
\end{enumerate}
\end{lemma}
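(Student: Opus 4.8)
The plan is to recast the statement through the dual ideal $I := F^* = \{Y \subseteq S : S \setminus Y \in F\}$. Since $F = \bigcap_{n} U_n$, we have $I = \{Y : Y \notin U_n \text{ for all } n < \om\}$ and dually $F^+ = \bigcup_{n<\om} U_n$, while $I^* = F$. Let $J$ be the $\sigma$-ideal generated by $I$, that is, the family of all countable unions of members of $I$; it is proper precisely when $S \notin J$. A routine computation, chasing complements through the definition of $F \restriction X$, shows: for $X \in F^+$, the filter $F \restriction X$ is $\om_1$-incomplete iff there is a countable family $\{D_k : k < \om\} \subseteq I$ with each $D_k \subseteq X$ and $\bigcup_k D_k \notin I$, iff there is $Y \subseteq X$ with $Y \in J \setminus I$. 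Under this translation, (1) asserts exactly $S \in J$, and (2) asserts exactly that every $I$-positive set contains an $I$-positive subset lying in $J$.

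With the reformulation in hand, $(1) \Rightarrow (2)$ is immediate: if $S \in J$ then any $X \in F^+$ is $I$-positive and, being a subset of $S$, already lies in $J$, so $X$ is its own witness. (Equivalently, in the original terms: a countable partition $\calA$ of $S$ into members of $I$ gives $\{S \setminus A : A \in \calA\} \subseteq F \subseteq F \restriction X$ with empty intersection, so $F \restriction X$ is $\om_1$-incomplete for every $X \in F^+$.)

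The work is in $(2) \Rightarrow (1)$, which I would do contrapositively. Assume $S \notin J$ and, for contradiction, that (2) holds, i.e. every $I$-positive set contains an $I$-positive subset belonging to $J$. By Zorn's Lemma choose a maximal pairwise disjoint family $\mathcal{D}$ of $I$-positive sets, each a member of $J$. Then $R := S \setminus \bigcup \mathcal{D}$ lies in $I$: otherwise $R$ is $I$-positive and, by (2), contains an $I$-positive member of $J$ disjoint from everything in $\mathcal{D}$, contradicting maximality. Now the crucial point, and the only place the shape of $F$ matters: each $Y \in \mathcal{D}$ is $I$-positive, so $Y \in U_{n(Y)}$ for some $n(Y) < \om$; as no ultrafilter contains two disjoint sets, $Y \mapsto n(Y)$ is injective on $\mathcal{D}$, so $\mathcal{D}$ is countable. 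Hence $S = R \cup \bigcup \mathcal{D}$ is a countable union of members of $J$ (note $R \in I \subseteq J$), so $S \in J$, contradicting the assumption.

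The only subtle step is the complement-chasing behind the ideal-theoretic description of $\om_1$-incompleteness of $F \restriction X$; after that the proof is very short. The ultrafilter hypothesis is genuinely needed: for an arbitrary ideal $I$ the equivalence fails — e.g. for $I$ the ideal of finite subsets of $\om_1$, where (2) holds but (1) does not — and what makes it go through here is exactly that $F^+ = \bigcup_n U_n$, a countable union of ultrafilters, admits no uncountable pairwise disjoint subfamily.
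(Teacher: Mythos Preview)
Your proof is correct, and it takes a genuinely different route from the paper's.

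The paper proves $(2)\Rightarrow(1)$ by a transfinite recursion: it builds a $\subseteq$-decreasing chain $\langle C_\alpha : \alpha<\gamma\rangle$ of $F$-positive sets, using the $\om_1$-incompleteness of $F\restriction C_\beta$ at each successor step to produce a countable descending sequence $C_{\beta,i}$ whose intersection drops out of $F\restriction C_\beta$. The recursion must halt before $\om_1$, since otherwise the differences $D_\alpha=C_\alpha\setminus C_{\alpha+1}$ would be $\om_1$ many pairwise disjoint $F$-positive sets, impossible because each lies in some $U_n$. Once it halts, the sets $A_{\alpha,i}=C_{\alpha,i}\setminus C_{\alpha,i+1}$ for $\alpha<\gamma$, $i<\om$ form the desired countable partition into $F^+$-null sets.

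You instead translate everything into the dual ideal $I$ and the $\sigma$-ideal $J$ it generates, reducing (1) to $S\in J$ and (2) to ``every $I$-positive set contains an $I$-positive member of $J$''; then a single Zorn application yields a maximal disjoint family $\mathcal D\subseteq J\cap I^+$, which is automatically countable for the same pigeonhole reason (each member lies in some $U_n$), and $S=R\cup\bigcup\mathcal D\in J$ follows.

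Both arguments hinge on the same structural fact---$F^+=\bigcup_n U_n$ admits no uncountable antichain---but your ideal-theoretic packaging is shorter and more conceptual, and it makes transparent \emph{why} the ultrafilter hypothesis is essential (your remark about the finite-set ideal on $\om_1$). The paper's recursive construction, by contrast, is more explicit: it actually hands you the partition piece by piece rather than inferring its existence from $S\in J$.
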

\begin{proof}
(1) $\Rightarrow$ (2) is clear.
For (2) $\Rightarrow$ (1),
we define $C_\alpha \subseteq S$ $(\alpha<\om_1)$ as follows:
First, let $C_0=S \in F$.
Suppose $C_\gamma$ is defined for every $\gamma <\alpha$ so that:
\begin{enumerate}
\item $\seq{C_\gamma \mid \gamma<\alpha}$ is a $\subseteq$-decreasing sequence of $F$-positive sets.
%\item $C_\gamma \in F^+$.
\item $C_\gamma=\bigcap_{\delta<\gamma} C_\delta$ if $\gamma$ is limit.
\item If $\gamma+1<\alpha$,
then there are $C_{\gamma,i} \in F\restriction C_\gamma$ for $i<\om$
such that $C_\gamma=C_{\gamma,0}\supseteq C_{\gamma,1} \supseteq\cdots$
and 
$C_{\gamma+1}=\bigcap_{i<\om} C_{\gamma,i} \notin F \restriction C_\gamma$.
\end{enumerate} 
Suppose $\alpha=\beta+1$.
Since $C_\beta \in F^+$ and $F \restriction C_\beta$ is $\om_1$-incomplete,
we can find 
$C_{\beta,i} \subseteq C_\beta$ for $i<\om$ 
such that $C_\beta=C_{\beta,0}\supseteq C_{\beta,1} \supseteq \cdots$,
$C_{\beta,i} \in F\restriction C_\beta$ for every $i<\om$,
and $\bigcap_{i<\om} C_{\beta,i} \notin F \restriction C_\beta$.
If $\bigcap_{i<\om} C_{\beta,i} \notin F^+$, then we finish this construction.
If $\bigcap_{i<\om} C_{\beta,i} \in F^+$, then let $C_\alpha=\bigcap_{i<\om} C_{\beta,i}$.

If $\alpha$ is limit and $\bigcap_{\gamma<\alpha} C_\gamma \notin F^+$,
then we finish the construction,
otherwise let $C_\alpha=\bigcap_{\gamma<\alpha} C_\gamma$.
 
We claim that this construction have to be finished at some $\gamma<\om_1$.
If not, then let $D_\alpha=C_\alpha \setminus C_{\alpha+1}$ for $\alpha<\om_1$.
Since $C_{\alpha+1} \notin F \restriction C_\alpha$, we have $D_\alpha \in F^+$.
For $\alpha<\om_1$, there is some $n_\alpha<\om$ with $D_\alpha \in U_{n_\alpha}$.
Hence there is some $n<\om$ such that
the set $\{\alpha<\om_1 \mid n_\alpha =n\}$ is uncountable.
Pick $\alpha<\beta<\om_1$ with $n=n_\alpha=n_\beta$.
We have $D_\alpha, D_\beta \in U_n$, hence $D_\alpha \cap D_\beta \neq \emptyset$.
This is impossible.

Now suppose $\{C_\alpha \mid \alpha<\gamma \}$ is defined as above but
$C_\gamma$ cannot be defined.
If $\gamma$ is limit, we have $\bigcap_{\alpha<\gamma} C_\alpha \notin F^+$.
%Since $\bigcap_{\alpha<\gamma} C_\alpha \notin F^+$, 
By shrinking each $C_\alpha$,
we may assume that $\bigcap_{\alpha<\gamma} C_\alpha =\emptyset$.
By the construction,
for $\alpha<\gamma$, there is a sequence $\seq{C_{\alpha,i} \mid i<\om}$ as above.
Let $A_{\alpha,i}=C_{\alpha,i} \setminus C_{\alpha,i+1}$.
We check that $A_{\alpha,i} \notin F^+$; 
Since $C_{\alpha,i}, C_{\alpha,i+1} \in F\restriction C_\alpha$,
we have that $A_{\alpha,i} =C_{\alpha,i} \setminus C_{\alpha,i+1} \notin (F \restriction C_\alpha)^+$.
Furthermore, since $A_{\alpha,i} \subseteq C_{\alpha,i} \subseteq C_\alpha$,
we have $A_{\alpha,i} \notin F^+$.
Now the family $\{A_{\alpha,i} \mid \alpha<\gamma, i<\om\}$ is a countable partition of $S$
such that $A_{\alpha,i} \notin F^+$, so $A_{\alpha,i} \notin U_n$ for every $n<\om$.
Thus (1) holds.

If $\beta$ is successor, say $\gamma=\beta+1$,
then there are $C_{\beta,i} \in (F \restriction C_\beta)^*$ ($i<\om$)
such that $C_\beta \supseteq C_{\beta,0} \supseteq C_{\beta,1} \supseteq \cdots$
and $\bigcap_{i<\om} C_{\beta,i} \notin F^+$.
As in the limit case, we may assume that $\bigcap_{i<\om} C_{\beta,i}=\emptyset$.
For each $\alpha \le \beta$, let $A_{\alpha,i} = C_{\alpha,i} \setminus C_{\alpha,i+1}$.
Then $\{A_{\alpha,i} \mid i<\om, \alpha \le \beta\}$ is a required partition.
\end{proof}

We will use the generic ultrapower argument.
See Foreman \cite{Foreman} for the generic ultrapower.
Here we present some basic definitions and facts.

Let $F$ be a  filter over the set $S$.
For a cardinal $\ka$, we say that $F$ is \emph{$\ka$-saturated} if for every family $\{X_\alpha\mid \alpha<\ka\}$
of $F$-positive sets, there are $\alpha<\beta<\ka$ with $X_\alpha \cap X_\beta \in F^+$.
For $F$-positive sets $X$ and $Y$, define $X \le_F Y$ if $X \setminus Y \notin F^+$.
Let $\bbP_F$ be the poset $F^+$ with the order $\le_F$.
Note that 
for $X, Y \in \bbP_F$, $X$ is compatible with $Y$ in $\bbP_F$ if and only if
$X \cap Y \in F^+$, and 
$\bbP_F$ has the $\ka$-c.c. if and only if $F$ is $\ka$-saturated.

If $G$ is a  $(V, \bbP_F)$-generic filter,
then $G$ is a $V$-ultrafilter over $S$,
that is the following hold:
\begin{itemize}
\item $S \in G$, $\emptyset \notin G$.
\item $X \cap Y \in G$ for every $X, Y \in G$.
\item For $X, Y \in V$, if $X \in G$ and $X \subseteq Y \subseteq S$ then $Y \in G$.
\item For every $X \subseteq S$ with $X \in V$,
either $X \in G$ or $S \setminus X \in G$.
\end{itemize}
Hence we can take 
the \emph{generic ultrapower} of $V$ by $G$.
For a $(V, \bbP_F)$-generic filter $G$,
let $\mathrm{Ult}(V, G)$ be the generic ultrapower of $V$ by $G$,
and $j:V \to \mathrm{Ult}(V,G)$ be the elementary embedding induced by $G$.
If $\mathrm{Ult}(V,G)$ is well-founded,
we identify $\mathrm{Ult}(V,G)$ with its transitive collapse.
We say that $F$ is \emph{precipitous} if for every $(V,\bbP_F)$-generic $G$,
$\mathrm{Ult}(V,G)$ is well-founded.

%For a filter $F$ over $S$, we say that $F$ has the \emph{disjointing property}
%if for every antichain $\calA$ in $\bbP_F$,
%there is $\{C_A \mid A \in \calA\}$ such that
%$C_A \in F$, and $(A \cap C_A) \cap (A' \cap C_{A'})=\emptyset$ for every
%distinct $A, A' \in \calA$.
\begin{fact}\label{1.4}
Let $\ka$ be an uncountable cardinal,
and $F$ a $\ka$-complete filter over $S$.
If $F$ is $\ka^+$-saturated,
then $F$ is precipitous.
%Moreover in the generic extension of $V$ via $\bbP_F$,
%the generic ultrapower of $V$ is closed under $\ka$-sequences.
\end{fact}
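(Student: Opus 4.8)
This is classical (see e.g.\ Foreman's chapter \cite{Foreman}); the argument I would give goes by contradiction. Suppose $F$ is $\ka$-complete and $\ka^+$-saturated but not precipitous. The plan is to read off the forcing relation of $\bbP_F$ a sequence of refining maximal antichains carrying witnessing functions, and then to use $\ka$-completeness to collapse it to an outright $\in$-descending $\om$-sequence of sets — which is impossible. The role of $\ka^+$-saturation is twofold: as the $\ka^+$-c.c.\ of $\bbP_F$ it lets names be reduced to maximal antichains, and as the bound $\size{W_n}\le\ka$ it will make the disjointification below survive under mere $\ka$-completeness.

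First I would fix a condition $p$ and a $\bbP_F$-name $\dot H$ with $p\Vdash$ ``$\mathrm{Ult}(V,\dot G)$ is ill-founded, as witnessed by $\dot H$''; restricting $F$ to $p$ (which leaves it $\ka$-complete and $\ka^+$-saturated) we may assume the largest condition forces ``$\dot H$ is an $\om$-sequence of functions $S\to V$ lying in $\check V$ with $\{s\in S:\dot H(n{+}1)(s)\in\dot H(n)(s)\}\in\dot G$ for all $n$''. Using the $\ka^+$-c.c., I would then build recursively maximal antichains $W_0\ge W_1\ge\dotsb$ of $\bbP_F$ with $W_{n+1}$ refining $W_n$, $\size{W_n}\le\ka$, together with functions $g^n_X\colon S\to V$ for $X\in W_n$, so that $X\Vdash\dot H(n)=\check g^n_X$ and, whenever $X\in W_{n+1}$ has (unique) $\le_F$-parent $Y\in W_n$, $X\le_F\{s:g^{n+1}_X(s)\in g^n_Y(s)\}$; at step $n{+}1$ one refines each $Y\in W_n$ by a maximal antichain of conditions deciding $\dot H(n{+}1)$, using that below $Y$ the generic ultrapower satisfies $[\dot H(n{+}1)]\in[\check g^n_Y]$.

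The core of the proof is the disjointification. Proceeding upward in $n$, I would replace each $X\in W_{n+1}$ with parent $Y\in W_n$ — whose replacement $Y^\ast$ is already at hand — by
\[
X^\ast:=\Bigl(X\cap Y^\ast\cap\{s:g^{n+1}_X(s)\in g^n_Y(s)\}\Bigr)\setminus\bigcup\{X'\in W_{n+1}:\mathrm{par}(X')=Y,\ X'\text{ listed before }X\}.
\]
The key point is that $X^\ast$ arises from $X$ by deleting a union of \emph{fewer than} $\ka$ many $F$-null sets — the intersections of $X$ with the other children of $Y$, the error set $X\setminus Y^\ast$ produced by the refinement relation, and the set where $g^{n+1}_X\notin g^n_Y$ — and the count is $<\ka$ precisely because $\ka^+$-saturation forces $\size{W_{n+1}}\le\ka$. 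Hence $X^\ast=_F X$ by $\ka$-completeness, so $X^\ast\in F^+$. The resulting families $W_n^\ast$ consist of pairwise disjoint $F$-positive sets with $\bigcup W_n^\ast=_F S$; each member of $W_{n+1}^\ast$ lies inside a member of $W_n^\ast$; and $X^\ast\subseteq\{s:g^{n+1}_X(s)\in g^n_Y(s)\}$ whenever $X^\ast$ refines $Y^\ast$.

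To conclude, set $Z:=\bigcap_{n<\om}\bigcup W_n^\ast$. As $F$ is $\sigma$-complete ($\ka$ being uncountable) and each $\bigcup W_n^\ast=_F S$, we get $Z=_F S$, so $Z\neq\emptyset$; fix $s\in Z$ and let $X_n$ be the unique member of $W_n^\ast$ with $s\in X_n$. The nesting forces $X_{n+1}\subseteq X_n$, hence $X_n=\mathrm{par}(X_{n+1})$, and therefore $g^{n+1}_{X_{n+1}}(s)\in g^n_{X_n}(s)$ for every $n$. Thus $\seq{g^n_{X_n}(s):n<\om}$ — an object of $V$, the entire construction having been carried out in $V$ via the forcing relation — is an $\in$-descending $\om$-sequence, contradicting the axiom of foundation. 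So $F$ is precipitous. The one step I expect to be delicate is the disjointification: one must confirm that every discarded set is genuinely $F$-null and that fewer than $\ka$ leave any given cell — and this is exactly where $\ka^+$-saturation is indispensable — while the rest is a routine unwinding of the definitions of a precipitous filter and of the generic ultrapower.
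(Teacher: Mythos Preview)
Your argument is correct; it is the standard proof of this classical result (going back to Solovay, and presented for instance in Jech's textbook and in Foreman's handbook chapter \cite{Foreman}). Note, however, that the paper itself gives no proof of Fact~\ref{1.4}: it is simply quoted as a known result, with the literature reference doing the work. So there is no in-paper argument to compare against --- you have supplied exactly the proof the paper defers to.

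One small point worth tightening in a final write-up: when you say the disjointification discards ``fewer than $\ka$'' null sets from each $X$, this relies not merely on $\size{W_{n+1}}\le\ka$ but on enumerating $W_{n+1}$ in order type $\le\ka$, so that each $X$ has $<\ka$ predecessors. You clearly have this in mind (``listed before $X$''), but it is the precise place where $\ka$-completeness and $\ka^+$-saturation mesh, so it deserves to be said explicitly.
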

%
%
%%For a cardinal $\mu$,
%%define $\beth_\alpha(\mu)$ as follows:
%%$\beth_0(\mu)=\mu$,
%%$\beth_{\alpha+1}=2^{\beth_\alpha(\mu)}$,
%%and $\beth_{\alpha}(\mu)=\sup_{\beta<\alpha} \beth_\beta(\alpha)$ is $\alpha$ if limit.
%
%%For an uncountable cardinal $\ka$ and a cardinal $\la \ge \ka$,
%%we say that $S \subseteq \pkl$ is \emph{stationary in $\pkl$}
%%if for every function $f:[\la]^{<\om} \to \la$,
%%there is $x \in S$ such that $f``[x]^{<\om} \subseteq x$.
%%For a function $f:[\la]^{<\om} \to \la$,
%%let $C_f=\{x \in \pkl \mid f``[x]^{<\om} \subseteq x\}$.
%%Let $\mathcal C_{\ka,\la}$ be the family of subsets of $\pkl$
%%such that $X \in 
%%\mathcal C_{\ka,\la} \iff C_f \subseteq X$ for some 
%%$f :[\la]^{<\om} \to \la$.
%%$\calC_{\ka,\la}$ forms an $\om_1$-complete fine filter over $\pkl$.
%%Note that $X \subseteq \pkl$ is stationary in $\pkl$ if and only if $X \in \calC_{\ka,\la}^+$.

\begin{prop}\label{1.5}
Let $\ka$ be an uncountable cardinal,
and $\la>2^\ka$.
Let $\{U_n \mid n<\om\}$ be a family of $\om_1$-incomplete fine ultrafilters over $\pkl$.
If the filter $F=\bigcap_{n<\om} U_n$ is $\om_1$-complete,
then there is a weakly Mahlo cardinal $<2^\om$,
and $\ka>(2^\om)^+$.
\end{prop}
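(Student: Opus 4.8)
\emph{Plan.} I would run a generic ultrapower argument on $F$.

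\emph{Setting up the embedding.} First, $F$ is $\om_2$-saturated: since $F^+=\bigcup_{n<\om}U_n$, any family $\{X_\alpha:\alpha<\om_2\}\subseteq F^+$ contains, by pigeonhole, two members $X_\alpha,X_\beta$ lying in a common $U_n$, and then $X_\alpha\cap X_\beta\in U_n\subseteq F^+$. As $F$ is $\om_1$-complete by hypothesis, Fact~\ref{1.4} gives that $F$ is precipitous; moreover $\om_2$-saturation makes the generic ultrapower closed under $\om_1$-sequences over the generic extension, so it has the same reals as the extension. Fix a $(V,\bbP_F)$-generic $G$ and let $j\colon V\to M=\ult(V,G)$, with $M$ transitive (precipitousness) and $\crit(j)\geq\om_1$ (as $F$ is $\om_1$-complete), so $j(\om)=\om$.

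\emph{Using fineness and incompleteness.} Since each $U_n$ is fine, $\{x\in\pkl:a\in x\}\in\bigcap_{n<\om}U_n=F\subseteq G$ for every $a<\la$, so $G$ is fine; hence $j[\la]\subseteq Z:=[\mathrm{id}]_G$ and $Z\in j(\pkl)=\bigl(\p_{j(\ka)}(j(\la))\bigr)^M$, i.e.\ $Z$ is a subset of $j(\la)$ of $M$-cardinality $<j(\ka)$, so $\la=\ot(j[\la])\leq\ot(Z)<j(\ka)$. As $\la>2^\ka>\ka$ this forces $j(\ka)\neq\ka$, i.e.\ $\crit(j)\leq\ka$. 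Next, since each $U_n$ is $\om_1$-incomplete, fix $g_n\colon\pkl\to\om$ with $g_n^{-1}(k)\notin U_n$ for all $k<\om$; because $j(\om)=\om$, $k_n:=[g_n]_G\in\om$, and, putting $W_n:=g_n^{-1}(k_n)$, we have $W_n\in G$ while $\pkl\setminus W_n\in U_n$.

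\emph{Extracting the conclusions.} It remains to read off both conclusions from $j$. On the one hand, being the critical point of the generic embedding of an $\om_2$-saturated precipitous filter, $\delta:=\crit(j)$ is a large cardinal in $M$: using the closure of $M$ under $\om_1$-sequences (so that $M$ and $V$ agree on cardinals and cofinalities low in the hierarchy) one shows $\delta$ is regular, then weakly inaccessible, then weakly Mahlo in $M$, and that this descends to $V$. On the other hand, the functions $g_n$ — equivalently the branch $\langle k_n:n<\om\rangle$ that $G$ selects through the $F$-positive tree whose nodes are the sets $\bigcap_{\ell<m}g_\ell^{-1}(i_\ell)$ ($m<\om$, $i_\ell<\om$), a tree with only $2^\om$ branches in $V$ — together with the fineness datum $Z$ (this is where $\la>2^\ka$ enters) force $\delta$ below $2^\om$; combined with the previous sentence this yields a weakly Mahlo cardinal $<2^\om$. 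Finally, feeding $\delta\leq 2^\om$, $\delta\leq\ka$, and $j(\ka)>\la>2^\ka$ back in, a counting/reflection argument gives $\ka>(2^\om)^+$.

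\emph{Expected obstacle.} The hard part is the last step: showing simultaneously that $\crit(j)$ is weakly Mahlo in $M$ and is bounded by $2^\om$, and transferring weak Mahloness from $M$ to $V$. The subtlety in the upper bound is that the branch $\langle k_n\rangle$, hence the sequence $\langle W_n:n<\om\rangle$, lives in $V[G]\setminus V$, so one cannot argue inside $V$ directly but must carefully track the $\om$-sequences the $\om_2$-c.c.\ forcing $\bbP_F$ adds, exploiting that the relevant tree has only continuum many branches in the ground model.
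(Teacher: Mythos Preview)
Your overall plan---take the generic ultrapower of $F$ and read off the conclusions from $j$---is exactly the paper's, but the sketch misses the two technical observations that make the argument close.

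First, your pigeonhole argument gives more than you claim: since $F^+=\bigcup_{n<\om}U_n$, already any $\om_1$-many positive sets contain two in a common $U_n$, so $\bbP_F$ has the \emph{c.c.c.}, not merely the $\om_2$-c.c. More is true: writing $I_X=\{n<\om:X\in U_n\}$, one checks $X\le_F Y\iff I_X\subseteq I_Y$, so $\bbP_F$ has a dense subset of size $\le 2^\om$. Both points are essential. The c.c.c.\ preserves all cardinals and cofinalities, so $\mu:=\crit(j)$ stays regular in $V[G]$, and then $\mu\in j(C)$ for every club $C\subseteq\mu$ in $V$ shows directly that $\mu$ is weakly Mahlo \emph{in $V$}; there is no need (and no obvious way) to go through $M$ first as you propose. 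The c.c.c.\ together with the $2^\om$-sized dense set gives $(2^\nu)^V=(2^\nu)^{V[G]}$ for every $\nu$, whence $j(2^\om)<(2^\om)^+=j((2^\om)^+)$; since $(2^\om)^+\le\la<j(\ka)$, elementarity yields $(2^\om)^+<\ka$ at once. (Your unspecified ``counting/reflection argument'' is just this; also, your claimed closure of $M$ under $\om_1$-sequences from $V[G]$ is not a consequence of $\om_2$-saturation alone and is not needed.)

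Second, and this is the real gap, your branch-through-a-tree idea does not bound $\mu$ by $2^\om$. The sequence $\langle k_n\rangle$ lies in $V[G]$, and a tree with $2^\om$-many $V$-branches says nothing about the critical point of $j$; what it is groping toward is the small dense subset of $\bbP_F$, which controls continuum values, not critical points. The paper argues differently: once $j(2^\om)<(2^\om)^+$ is known, $\mu\neq 2^\om$. If $\mu>2^\om$, fix $X\in G$ forcing $\crit(j)=\mu$; then $F\restriction X$ is $\mu$-complete, hence $(2^\om)^+$-complete, and it is $\om_1$-saturated (c.c.c.). By Tarski's theorem there is $Y\in(F\restriction X)^+$ with $F\restriction Y$ an ultrafilter; but then $F\restriction Y=U_n$ for some $n$, making $U_n$ $\mu$-complete, contradicting the hypothesis that every $U_n$ is $\om_1$-incomplete. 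This use of Tarski's theorem is the missing idea, and your proposal supplies no substitute.
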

\begin{proof}
First note that for $X \subseteq \pkl$,
$X \in \bbP_F$ if and only if $X \in U_n$ for some $n<\om$.
For each $X \in \bbP_F$,
let $I_X=\{n<\om \mid X \in U_n\} \neq \emptyset$.

We shall prove a series of claims.

\begin{claim}\label{claim1.6}
For $X, Y \in \bbP_F$,
$X \le_F Y \iff I_X \subseteq I_Y$,
and $X$ is compatible with $Y \iff I_X \cap I_Y \neq \emptyset$.
\end{claim}
\begin{proof}
If $I_X \nsubseteq I_Y$, pick $n \in I_X \setminus I_Y$.
We know $X \in U_n$ but $Y \notin U_n$,
hence $X \setminus Y \in U_n$, and $X \not \le_F Y$.
For the converse, suppose $X \not \le_F Y$.
Then $X \setminus Y \in F^+$, and there is $n<\om$ with
$X \setminus Y \in U_n$. We have $X \in U_n$ but $Y \notin U_n$,
so $n \in I_X \setminus I_Y$ and $I_X \nsubseteq I_Y$.

If $X$ is compatible with $Y$, then
there is $Z \le_F X, Y$.
We may assume $Z \subseteq X \cap Y$,
then $I_Z \subseteq I_X \cap I_Y$, so $I_X \cap I_Y \neq \emptyset$.
For the converse,
if $I_X \cap I_Y \neq \emptyset$,
take $n \in I_X \cap I_Y$.
Then $X, Y \in U_n$, so $X \cap Y \in U_n$.
Hence $X \cap Y \in F^+$, and
we have $X \cap Y \le_F X, Y$.
\end{proof}

%\begin{claim}
%For every $X \in \bbP_F$,
%there are $Y, Z \le_F X$ such that $Y$ is incompatible with $Z$.
%In particular, the forcing with $\bbP_F$ must add new  subset.
%\end{claim}
%\begin{proof}
%First note that $I_X $ has at least two element;
%If $I_X=\{n\}$, then  we have that $F \restriction X=U_n$.
%But $U_n$ is not $\om_1$-complete.
%So we can take distinct $n, m \in I_X$.
%Take $X' \in U_n$ with $X' \notin U_m$.
%Let $Y=X \cap X'$ and $Z=X \setminus X'$,
%$Y \in U_n$ and $Z \in U_m$, hence $Y,Z \in \bbP_F$ and $Y$ is incompatible with $Z$.
%\end{proof}

\begin{claim}\label{1.8}
$\bbP_F$ has the c.c.c.,
and has a dense subset of size $\le 2^\om$.
\end{claim}
\begin{proof}
Take an uncountable family $\{X_\alpha \mid \alpha<\om_1\} \subseteq \bbP_F$.
Since $I_{X_\alpha} \subseteq \om$,
there must be $\alpha<\beta<\om_1$ with $I_{X_\alpha} \cap I_{X_\beta} \neq \emptyset$.
Hence $X_{\alpha}$ is compatible with $X_{\beta}$ by Claim \ref{claim1.6}.

For $X, Y \in \bbP_F$,
define $X \approx Y$ if $X \le_F Y$ and $Y \le_F X$.
By Claim \ref{claim1.6},
$X \approx Y$ if and only if $I_X=I_Y$.
Hence there are at most $2^\om$ many equivalence classes,
and we can take a dense subset in $\bbP_F$ of size $\le 2^\om$.
\end{proof}
From now on, we identify $\bbP_F$ with its dense subset of size $\le 2^\om$.

Now, we know that $F$ is $\om_1$-complete and $\om_1$-saturated.
Hence $F$ is precipitous by Fact \ref{1.4}.
Take a $(V, \bbP_F)$-generic $G$,
and let $j:V \to \mathrm{Ult}(V,G)$ be the generic elementary embedding induced by $G$.
Since $F$ is precipitous, we can identify $\mathrm{Ult}(V,G)$ with its transitive collapse $M$.
For a map $f :\pkl \to V$ with $f \in V$,
let $[f]$ be the equivalence class of $f$ by $G$.
If $id:\pkl \to \pkl$ is the identity map,
then we have $j``\lambda \subseteq [id]$ because $F$ is a fine filter.
Moreover $\la \le \size{[id]}^M < j(\ka)$.

Let $\mu$ be the critical point of $j$. $\mu$ is regular uncountable in $V$.
Since $\bbP_F$ has the c.c.c., $\mu$ remains regular in $V[G]$, and so does in $M$.
%We prove that $\mu$ is as required.

\begin{claim}\label{1.9}
$\mu$ is weakly Mahlo in $V$.
\end{claim}
\begin{proof}
Let $C \subseteq \mu$ be a club in $\mu$ with $C \in V$.
Then $\mu \in j(C)$, hence
it holds that the statement  ``$j(C)$ contains a regular cardinal'' in $M$.
By the elementarity of $j$,
$C$ contains a regular cardinal in $V$.
\end{proof}
Note that $\mu$ is in fact weakly $\mu$-Mahlo.

\begin{claim}
For every cardinal $\nu$,
we have 
$(2^\nu)^V=(2^\nu)^{V[G]}$.
\end{claim}
\begin{proof}
Since $\bbP_F$ has a dense subset of size $(2^\om)^V$ and
has the c.c.c., there are at most $(2^\om)^\om$-many antichains in $\bbP_F$,
and at most $((2^\om)^\nu)^V=(2^\nu)^V$ many canonical names for subsets of $\nu$.
Hence we have $(2^\nu)^V=(2^\nu)^{V[G]}$.
\end{proof}

Thus for each cardinal $\nu$,
we can let $2^\nu$ denote $(2^\nu)^V$ and $(2^\nu)^{V[G]}$.
%In particular we have $\beth_\alpha(\nu)^V=\beth_\alpha(\nu)^{V[G]}$
%for every cardinal $\nu$ and ordinal $\alpha$,
%we can let $\beth_\alpha(\nu)$ denote it.
In addition, since $\bbP_F$ has the c.c.c.,
we have $(\nu^+)^V=(\nu^+)^{V[G]}$ for every cardinal $\nu$,
and we can let $\nu^+$ denote $(\nu^+)^V$ and $(\nu^+)^{V[G]}$.
Note that $(\nu^+)^M  \le \nu^+$ for every cardinal $\nu$.

%Since $\mu$ is the critical point of $j$,
%we have that $A=j(A) \cap \mu \in M$ for every $A \subseteq \mu$ with $A \in V$.
%Hence $(\mu^+)^V \le (\mu^+)^M$.
%Because $\bbP_F$ has the c.c.c., we have $(\mu^+)^V=(\mu^+)^{V[G]} \ge (\mu^+)^M$.
%This means that $\mu^+<j(\mu)$.

\begin{claim}\label{1.11}
$j(2^\om)<(2^\om)^+=j((2^\om)^+)<\ka$.
\end{claim}
\begin{proof}
If $j(2^\om) \ge (2^\om)^+$,
then $M$ has at least $(2^\om)^+$ many subsets of $\om$,
so we have $(2^\om)^{V[G]} \ge (2^\om)^+$, this contradicts to the previous claim,
and we have $j(2^\om)<(2^\om)^+$.
Thus $(2^\om)^+ \le j((2^\om)^+)=(j(2^\om)^+)^M \le 
j(2^\om)^+
\le (2^\om)^+$,
so we have $j((2^\om)^+)=(2^\om)^+$.

Finally, since $(2^\om)^+ \le (2^\ka)^+ \le \la$, we have $j((2^\om)^+)=(2^\om)^+ \le \la<j(\ka)$.
Then $(2^\om)^+<\ka$ by the elementarity of $j$.
\end{proof}

We completes the proof by showing the following:
\begin{claim}\label{1.12}
$\mu<2^\om$.
\end{claim}
\begin{proof}
Note that $j(\mu)>\mu^+$
since $\p(\mu)^V \subseteq \p(\mu)^M$ 
and $j(\mu)$ is a limit cardinal in $M$.

Since $j(2^\om)<(2^\om)^+$, we have
$\mu \neq 2^\om$. Next we show $2^\om>\mu$.
If not, then $2^\om<\mu$.
Take $X \in G$ such that
$X \Vdash_{\bbP_F}$``the critical point of $j$ is $\mu$''.
Then the filter $F \restriction X$ is in fact $\mu$-complete.
We use the following well-known fact by Tarski:
\begin{fact}[Tarski, e.g. see Kanamori \cite{Ka}]
Let $S$ be an uncountable set,
and $F$ a filter over $S$.
If $F$ is $(2^\om)^+$-complete and $\om_1$-saturated,
then there is $Y \in F^+$ such that $F \restriction Y$ is an ultrafilter.
\end{fact}

Since $2^\om<\mu$,
by Tarski's theorem there is $Y \in (F \restriction X)^+$ such that
$F \restriction Y$ is an ultrafilter.
Hence $F \restriction Y =U_n$ for some $n$,
and $U_n$ is $\mu$-complete. This is a contradiction.
\end{proof}
\end{proof}

\begin{lemma}\label{normal}
Let $X$ be a countably tight $T_1$ space.
Then there is a normal countably tight $T_1$ space $Y$ with
$t(Y_\delta)=t(X_\delta)$.
\end{lemma}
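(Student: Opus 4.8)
The plan is to reduce the statement to a very concrete construction, exploiting the elementary fact that a space with at most one non-isolated point is automatically normal (given two disjoint closed sets, at most one of them contains the non-isolated point, and the other, being a subset of the open discrete remainder, is clopen, so it separates the pair trivially), and that such a space is $T_1$ as soon as its underlying set is. So I would try to realize a witness for the tightness of $X_\delta$ inside such a space. Fix $\theta=t(X_\delta)$; dispose of the case $\theta=\om$ by taking $Y$ to be a countably infinite discrete space, and assume $\theta>\om$ from now on.

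For each infinite cardinal $\mu<\theta$ the inequality $t(X_\delta)>\mu$ yields a pair $(A_\mu,p_\mu)$ with $A_\mu\subseteq X$, $p_\mu\in\overline{A_\mu}^\delta$, and $p_\mu\notin\overline{B}^\delta$ for every $B\in[A_\mu]^{\le\mu}$; in particular $p_\mu\notin A_\mu$. I would then let $Y_\mu$ be the set $A_\mu\cup\{p_\mu\}$ topologized so that every point of $A_\mu$ is isolated and the neighbourhoods of $p_\mu$ are the sets $\{p_\mu\}\cup(U\cap A_\mu)$, where $U$ ranges over the open neighbourhoods of $p_\mu$ in $X$. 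This $Y_\mu$ is $T_1$ (using that $X$ is $T_1$ to see that each singleton of $A_\mu$ is closed) and normal by the remark above, and it is countably tight: the only point to check is $p_\mu$, and if $p_\mu\in\overline{B}^{Y_\mu}$ with $B\subseteq A_\mu$ then by definition of the topology $p_\mu\in\overline{B}^X$, so countable tightness of $X$ gives a countable $B'\subseteq B$ with $p_\mu\in\overline{B'}^X$, hence $p_\mu\in\overline{B'}^{Y_\mu}$.

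Next I would compute $t((Y_\mu)_\delta)$. The key point is that a countable intersection of neighbourhoods of $p_\mu$ in $Y_\mu$ is exactly a set $\{p_\mu\}\cup(W\cap A_\mu)$ with $W$ a neighbourhood of $p_\mu$ in $X_\delta$; since the remaining points of $Y_\mu$ are isolated, it follows that for $B\subseteq A_\mu$ one has $p_\mu\in\overline{B}^{(Y_\mu)_\delta}$ if and only if $p_\mu\in\overline{B}^\delta$. Hence, on the one hand $(A_\mu,p_\mu)$ still witnesses $t((Y_\mu)_\delta)>\mu$, so $t((Y_\mu)_\delta)\ge\mu^+$; on the other hand, if $p_\mu\in\overline{C}^{(Y_\mu)_\delta}$ with $C\subseteq A_\mu$, then $p_\mu\in\overline{C}^\delta$, and since $t(X_\delta)=\theta$ there is $C'\in[C]^{\le\theta}$ with $p_\mu\in\overline{C'}^\delta$, hence $p_\mu\in\overline{C'}^{(Y_\mu)_\delta}$; so $t((Y_\mu)_\delta)\le\theta$. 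Finally I would take $Y$ to be the topological sum $\bigsqcup_{\mu<\theta}Y_\mu$. Normality, the $T_1$ property and countable tightness are all preserved by topological sums, and since the $G_\delta$-modification commutes with topological sums we get $t(Y_\delta)=\sup_{\mu<\theta}t((Y_\mu)_\delta)$, a quantity lying between $\sup_{\mu<\theta}\mu^+=\theta$ and $\theta$, hence equal to $\theta=t(X_\delta)$.

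The step I expect to require the most care is the upper bound $t((Y_\mu)_\delta)\le\theta$. The naive attempt — taking $Y$ to be a subspace of $X$ witnessing the tightness of $X_\delta$ — fails because subspaces of non-normal spaces need not be normal, which is precisely why one is forced to discretize $A_\mu$ artificially; and discretizing could a priori inflate the tightness of the $G_\delta$-modification. The remedy is the exact correspondence between $(Y_\mu)_\delta$-closures at $p_\mu$ and $X_\delta$-closures at $p_\mu$, and verifying it cleanly means checking that the neighbourhood filter of $p_\mu$ in $(Y_\mu)_\delta$ is generated precisely by the traces on $A_\mu$ of the $X_\delta$-neighbourhoods of $p_\mu$, nothing finer. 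A secondary, minor point is the passage from a single $Y_\mu$ to the topological sum over all infinite $\mu<\theta$, which is needed only to cover the case where $t(X_\delta)$ is a limit cardinal not attained by any single pair $(A,p)$.
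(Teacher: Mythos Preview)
Your argument is correct and uses the same core idea as the paper: retopologize so that everything except a chosen witness point becomes isolated, which forces normality for free while preserving both countable tightness and the $X_\delta$-closure behaviour at that point. The paper's version is slightly terser: it keeps the whole underlying set of $X$, isolates every point except a single $p^*$ that witnesses $t(X_\delta)$, and declares the verification ``easy''.

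The one genuine difference is that the paper tacitly assumes the tightness $t(X_\delta)$ is \emph{attained} at a single pair $(A,p^*)$, i.e.\ that some $p^*\in\overline{A}^\delta$ admits no $B\in[A]^{<t(X_\delta)}$ with $p^*\in\overline{B}^\delta$. This is automatic when $t(X_\delta)$ is a successor cardinal (which is all that is actually needed for the applications in the paper), but can fail at limit cardinals. Your topological-sum construction over all $\mu<\theta$ handles this case explicitly, so your argument proves the lemma exactly as stated, whereas the paper's one-point version, read literally, leaves a small gap in the limit case. In exchange the paper's proof is a couple of lines; yours is longer but self-contained.
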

\begin{proof}
Fix a point $p^* \in X$ such that
there is $A \subseteq X$ with $p^* \in \overline{A}^\delta$,
but  no $B \subseteq A$ with $\size{B}<t(X_\delta)$ and $p \in \overline{B}^\delta$.
Let $Y$ be the space $X$ equipped with the following topology:
\begin{enumerate}
\item Every $q \in X \setminus\{p^*\}$ is isolated in $Y$.
\item A local base for $p^*$ in $Y$ is the same to in $X$.
\end{enumerate}
It is easy to check that
$Y$ is a normal countably tight $T_1$ space with $t(Y_\delta)=t(X_\delta)$.
\end{proof}

%\begin{claim}
%$\ka \ge (2^\gamma)^{+\gamma}$.
%\end{claim}
%\begin{proof}
%First we check that $\ka >2^\gamma$.
%If $2^\gamma \ge \ka$,
%then $j(2^\gamma) \ge j(\ka)>\la$.
%$j(2^\gamma)=\size{\p(j(\gamma))^M}^M>\la$,
%so we can take a surjection from $\p(j(\gamma))$ onto $\la$.
%However, because $j(\gamma)<\ka$,
%we have $\size{\p(j(\gamma))} \le 2^\ka<\la$, this is a contradiction.
%
%To show that $(2^\gamma)^{+\gamma} \le \ka$,
%it is enough to see that
%$(2^\gamma)^{+\alpha}<\ka$ for every $\alpha<\gamma$.
%Let $\delta=2^\gamma$. As before, since $j(\gamma)<\ka$,
%we know $j(\delta)\le (2^\ka)^+<\la$.
%Thus $j(\delta^{+\alpha})=(j(\delta)^{+\alpha})^M \le j(\delta)^{+\alpha}
%\le ((2^\ka)^+)^{+\alpha} <\la<j(\ka)$,
%and $\delta^{+\alpha}<\ka$.
%\end{proof}

Now we have the theorems.

\begin{cor}
There is a normal countably tight $T_1$ space $X$ 
such that $t(X_\delta) >2^\om$.
\end{cor}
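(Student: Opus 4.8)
The plan is to combine the two machines that have already been assembled. By Lemma \ref{normal}, it suffices to produce a countably tight $T_1$ (in fact Tychonoff) space $X$ with $t(X_\delta)>2^\om$; passing to the associated normal space $Y$ then costs nothing. So the real task is to verify that the hypotheses of Proposition \ref{1.2} can be met with $\ka>2^\om$. Concretely, I would fix $\ka=(2^\om)^{++}$ (any regular cardinal above $(2^\om)^+$ works) and $\la\ge\ka$ large, and check:
(a) there is no $\om_1$-complete fine ultrafilter over $\p_\ka\la$; and
(b) for every countable family $\{U_n\mid n<\om\}$ of fine ultrafilters over $\p_\ka\la$ there is a countable partition $\calA$ of $\p_\ka\la$ with $A\notin U_n$ for all $A\in\calA$ and all $n<\om$.

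For (a): if there were an $\om_1$-complete fine ultrafilter over $\p_\ka\la$, then in particular $\bigcap_{n<\om}U_n$ with all $U_n$ equal to it would be an $\om_1$-complete filter which is the intersection of finitely (hence trivially countably) many fine ultrafilters; more to the point, an $\om_1$-complete fine ultrafilter over $\p_\ka\la$ with $\la\ge\ka$ yields, by standard arguments, an $\om_1$-complete (indeed uniform) ultrafilter on a cardinal $\le\la$, and chasing the critical point as in Claim \ref{1.9}–Claim \ref{1.12} would force a measurable-type cardinal; but the cleanest route is simply: the existence of an $\om_1$-complete fine ultrafilter over $\p_\ka\la$ for some $\la\ge\ka$ is equivalent (Fact \ref{1.6}(1) for the relevant instance, or directly) to $\ka$ being (at least the start of) an $\om_1$-strongly compact-like situation, and in ZFC with no large cardinal assumption we may simply choose to work below the least measurable — but since Theorem \ref{thm1.3}/this Corollary is meant to be a theorem of ZFC, I should instead argue that the relevant failure is automatic: Proposition \ref{1.5} tells us that if $F=\bigcap_n U_n$ is $\om_1$-complete with the $U_n$ being $\om_1$-incomplete fine ultrafilters over $\p_\ka\la$ and $\la>2^\ka$, then there is a weakly Mahlo cardinal $<2^\om$ and $\ka>(2^\om)^+$. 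So by contrapositive, if we pick $\ka=(2^\om)^{++}$ we are already above $(2^\om)^+$; the escape route the paper wants is that when we are not above a weakly Mahlo below $2^\om$ (which we are free to arrange, or which just fails outright if there is no such cardinal) the $\om_1$-completeness collapses. Here is the point I would make precise: to get (b) from (a)-type data, use Lemma \ref{1.14} — condition (b) is exactly condition (1) of Lemma \ref{1.14} with $S=\p_\ka\la$ — so (b) holds for a given family iff for every $X\in F^+$ the filter $F\restriction X$ is $\om_1$-incomplete, where $F=\bigcap_n U_n$. If some $F\restriction X$ were $\om_1$-complete, then (shrinking to where the critical point is some fixed $\mu$) Proposition \ref{1.5} gives a weakly Mahlo cardinal $<2^\om$; thus if there is no weakly Mahlo cardinal $<2^\om$, condition (b) holds for every countable family of fine ultrafilters over $\p_\ka\la$.

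So the chain is: choose $\ka=(2^\om)^{++}$, $\la=2^\ka$ (so $\la>2^\ka$ fails by one notch — better take $\la=(2^\ka)^+$, or just any $\la>2^\ka$), and split into two cases. If there is a weakly Mahlo cardinal $<2^\om$, then \dots\ actually this is where I would instead invoke the second, unconditional part: even when there is such a cardinal, one can still run the argument by taking $\ka$ large enough relative to it, or appeal to the Main-Theorem machinery. But for the ZFC statement of Theorem \ref{thm1.3}/this Corollary, the honest route is: if there is no weakly Mahlo cardinal $<2^\om$ then by Proposition \ref{1.5} (contrapositive) and Lemma \ref{1.14}, assumptions of Proposition \ref{1.2} hold for $\ka=(2^\om)^{++}>2^\om$, giving $t(X_\delta)\ge\ka>2^\om$; and if there \emph{is} a weakly Mahlo cardinal $<2^\om$, then there are plenty of regular cardinals below $2^\om$ and one uses a direct construction (or the same Proposition \ref{1.2} with a $\ka\le 2^\om$ that is still $>2^\om$ is impossible, so one genuinely needs a separate argument). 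Given the way the paper is laid out, I expect the intended proof is short: apply Proposition \ref{1.2} at $\ka=(2^\om)^{++}$, where assumptions (a) and (b) are verified using Proposition \ref{1.5} and Lemma \ref{1.14}, then apply Lemma \ref{normal}.

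The main obstacle, then, is the verification of assumption (b) of Proposition \ref{1.2}: one must show that \emph{every} countable family of fine ultrafilters over $\p_\ka\la$ admits a countable "everywhere small" partition. By Lemma \ref{1.14} this reduces to showing $F\restriction X$ is $\om_1$-incomplete for every $F$-positive $X$, and the only way $\om_1$-completeness can sneak in is via a critical point that Proposition \ref{1.5} locates below $2^\om$ as a weakly Mahlo cardinal. Handling the case where such a weakly Mahlo cardinal genuinely exists — i.e., getting an \emph{unconditional} ZFC example rather than one contingent on "no weakly Mahlo $<2^\om$" — is the delicate point; I would resolve it by observing that if there is a weakly Mahlo (hence a measurable-strength obstruction is \emph{not} implied, only a weakly Mahlo), one can nonetheless choose $\ka$ above $2^{2^\om}$ or iterate the analysis, since Proposition \ref{1.5} pins any offending critical point strictly below $2^\om$, so at a $\ka$ chosen well above all cardinals $\le 2^\om$ the completeness still cannot occur at a critical point $\geq\ka$, while the fineness forces critical points below $\ka$; the bookkeeping to make "below $2^\om$" incompatible with "$\ge\ka=(2^\om)^{++}$" is exactly what closes the gap. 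I expect the write-up to be two or three lines once Proposition \ref{1.5}, Lemma \ref{1.14}, Proposition \ref{1.2} and Lemma \ref{normal} are in hand.
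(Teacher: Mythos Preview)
Your overall strategy---Proposition \ref{1.2} plus Lemma \ref{1.14} plus Proposition \ref{1.5} plus Lemma \ref{normal}---is exactly the paper's, but your choice $\ka=(2^\om)^{++}$ is the wrong one, and this is precisely why you cannot close the argument unconditionally. Proposition \ref{1.5} has \emph{two} conclusions: a weakly Mahlo cardinal $<2^\om$, \emph{and} $\ka>(2^\om)^+$. With your $\ka$ the second conclusion is already true, so the contrapositive only buys you ``no weakly Mahlo $<2^\om$ $\Rightarrow$ $F$ is $\om_1$-incomplete'', which is exactly the conditional statement of Theorem \ref{thm1.4}(2), not the unconditional one you want. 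Your attempted escape (``choose $\ka$ above $2^{2^\om}$ or iterate the analysis'') does not help: pushing $\ka$ higher never falsifies the inequality $\ka>(2^\om)^+$, so you never get a contradiction from that conjunct.

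The paper instead takes $\ka=(2^\om)^+$. Then the conclusion ``$\ka>(2^\om)^+$'' of Proposition \ref{1.5} is outright false, so the contrapositive gives, unconditionally in ZFC, that no intersection $\bigcap_{n}U_n$ of $\om_1$-incomplete fine ultrafilters over $\p_\ka\la$ (with $\la>2^\ka$) can be $\om_1$-complete. Since $(2^\om)^+$ is not $\om_1$-strongly compact, one can fix $\la>2^\ka$ with no $\om_1$-complete fine ultrafilter over $\p_\ka\la$; in particular every fine ultrafilter there is $\om_1$-incomplete, so Proposition \ref{1.5} applies. For condition (2) of Lemma \ref{1.14} one needs $F\restriction X$ to be $\om_1$-incomplete for every $X\in F^+$; the paper observes that $F\restriction X=\bigcap_{n\in I}U_n$ where $I=\{n:X\in U_n\}$, and applies Proposition \ref{1.5} to this subfamily. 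Proposition \ref{1.2} then yields $t(X_\delta)\ge\ka=(2^\om)^+>2^\om$, and Lemma \ref{normal} upgrades to a normal $T_1$ example. Note that $\ka=(2^\om)^+$ already gives $t(X_\delta)>2^\om$; you do not need to go to $(2^\om)^{++}$.
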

\begin{proof}
Let $\ka=(2^\om)^+$.
$\ka$ is not $\om_1$-strongly compact,
and so there is $\la \ge \ka$
such that $\pkl$ has no $\om_1$-complete fine ultrafilter.
$\la$ can be arbitrary large, so we may assume $\la>2^\ka$.
By Proposition \ref{1.2} and Lemmas \ref{1.14}, \ref{normal},
it is enough to show that for every countable family
$\{U_n \mid n<\om\}$ of fine ultrafilters over $\pkl$
and $X \in F^+=(\bigcap_{n<\om} U_n)^+$, the filter $F \restriction X$ is $\om_1$-incomplete.
Let $I=\{n<\om \mid X \in U_n\}$.
Then it is easy to check that
$F \restriction X=\bigcap_{n \in I} U_n$.
Because $\ka=(2^\om)^+$, 
we know that $\bigcap_{n \in I} U_n$ is $\om_1$-incomplete by Proposition \ref{1.5}.
\end{proof}

\begin{cor}
Suppose there is no weakly Mahlo cardinal $< 2^\om$.
If there is no $\om_1$-strongly compact cardinal below a cardinal $\ka$,
then there is a normal countably tight $T_1$ space $X$ with $t(X_\delta) \ge \ka$.
\end{cor}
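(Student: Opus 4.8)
The plan is to reduce the statement to Proposition \ref{1.2} together with Lemmas \ref{1.14} and \ref{normal}, exactly as in the preceding corollary, but now tracking the hypothesis ``no weakly Mahlo cardinal $<2^\om$'' in place of the concrete choice $\ka=(2^\om)^+$. So the first step is to observe that since there is no $\om_1$-strongly compact cardinal below $\ka$, in particular $\ka$ itself is not $\om_1$-strongly compact, hence by Fact \ref{1.6}(1) there is a cardinal $\la \ge \ka$ such that $\pkl$ carries no $\om_1$-complete fine ultrafilter. Since such $\la$ can be taken arbitrarily large (every cardinal above an $\om_1$-strongly compact is $\om_1$-strongly compact, so the failure at $\ka$ propagates), I would additionally arrange $\la > 2^\ka$, which is what Proposition \ref{1.5} needs.

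Next I would verify the second hypothesis of Proposition \ref{1.2}: for every countable family $\{U_n \mid n<\om\}$ of fine ultrafilters over $\pkl$, there is a countable partition $\calA$ of $\pkl$ with $A \notin U_n$ for all $A \in \calA$, $n<\om$. By Lemma \ref{1.14} applied to $S = \pkl$ (uncountable), this is equivalent to showing that for the filter $F = \bigcap_{n<\om} U_n$ and every $X \in F^+$, the filter $F \restriction X$ is $\om_1$-incomplete. As in the proof of the earlier corollary, one checks $F \restriction X = \bigcap_{n \in I} U_n$ where $I = \{n<\om \mid X \in U_n\}$, so it suffices to show every countable intersection of fine ultrafilters over $\pkl$ is $\om_1$-incomplete. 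Suppose not: some such intersection $F'$ is $\om_1$-complete. If all the $U_n$ were themselves $\om_1$-incomplete, Proposition \ref{1.5} would give a weakly Mahlo cardinal $<2^\om$, contradicting our hypothesis. So some $U_n$ is $\om_1$-complete, i.e.\ it is an $\om_1$-complete fine ultrafilter over $\pkl$ — but that contradicts the choice of $\la$. Hence no such $F'$ exists, and the hypothesis of Proposition \ref{1.2} holds.

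Therefore Proposition \ref{1.2} yields a countably tight Tychonoff space $X'$ with $t(X'_\delta) \ge \ka$; note this space is in particular $T_1$. Applying Lemma \ref{normal} to $X'$ produces a normal countably tight $T_1$ space $X$ with $t(X_\delta) = t(X'_\delta) \ge \ka$, which is the desired conclusion.

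The only delicate point is the reading of Proposition \ref{1.5}: its statement asserts that if $\{U_n\}$ is a family of $\om_1$-\emph{incomplete} fine ultrafilters and $\bigcap_n U_n$ is $\om_1$-complete, then a weakly Mahlo cardinal $<2^\om$ exists. One must be careful that in the case analysis above the relevant family really consists of $\om_1$-incomplete ultrafilters before invoking it — which is exactly why the argument first splits off the case that some $U_n$ is $\om_1$-complete (impossible by the choice of $\la$) and only then applies Proposition \ref{1.5} to the remaining all-incomplete case. This bookkeeping is the main thing to get right; everything else is a direct citation of the machinery already assembled.
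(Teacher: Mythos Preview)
Your proposal is correct and follows the same route as the paper: choose $\la>2^\ka$ with no $\om_1$-complete fine ultrafilter on $\pkl$, use Proposition~\ref{1.5} (under the ``no weakly Mahlo $<2^\om$'' hypothesis) together with Lemma~\ref{1.14} to verify the partition hypothesis of Proposition~\ref{1.2}, and then apply Lemma~\ref{normal}. One small simplification: your case split on whether some $U_n$ is $\om_1$-complete is unnecessary, since by the very choice of $\la$ every fine ultrafilter on $\pkl$ is already $\om_1$-incomplete, so Proposition~\ref{1.5} applies directly --- this is how the paper phrases it.
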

\begin{proof}
We know that $\ka$ is not $\om_1$-strongly compact, so there is a large $\la>\ka$
such that $\pkl$ cannot carry an $\om_1$-complete fine ultrafilter.
By the assumption and Proposition \ref{1.5},
there is no countable family of fine ultrafilters $\{U_n\mid n<\om\}$ over $\pkl$
with $\bigcap_{n<\om} U_n$ $\om_1$-complete.
Again, by 
Proposition \ref{1.2} and Lemmas \ref{1.14}, \ref{normal}
we can take a normal countably tight $T_1$ space $X$ with $t(X_\delta) \ge \ka$.
\end{proof}

\begin{question}
Is the equality
that ``the least $\om_1$-strongly compact$=\sup\{t(X_\delta) \mid X$ is normal countably tight $T_1\}$''
provable from ZFC without any assumptions?
\end{question}

%\begin{question}
%Let $S$ be an uncountable set,
%and $U_n$ ($n<\om$) be $\om_1$-incomplete ultrafilters over $S$.
%Is it possible that the filter $\bigcap_n U_n$ is $\om_1$-complete?
%\end{question}

\begin{question}
In the theorems, can we replace ``countably tight'' by ``Fr\'echet-Urysohn''?
For instance, is there a ZFC-example of a Fr\'echet-Urysohn space $X$ with
$t(X_\delta)>2^\om$?
\end{question}
Note that our space $C_p(\mathrm{Fine}(\pkl))$ is not Fr\'echet-Urysohn;
It is known that for a compact Hausdorff space $Y$,
$C_p(Y)$ is Fr\'echet-Urysohn if and only if $Y$ is scattered
(Pytkeev \cite{P2}, Gerlits \cite{Gerlits}).
However the space $\mathrm{Fine}(\pkl)$ is not scattered.

%\section{atodekesu}
%
%\begin{lemma}
%Let $\bbP$ be a $\sigma$-centered poset.
%Then $\bbP$ adds an unbounded real.
%\end{lemma}
%Let $\bbP=\bigcup_n A_n$ where $A_n$ is a centered set.
%
%Let $\dot r$ be a name such that
%$p \Vdash$``$\dot r:\om \to 2, \dot r \notin V$''.
%For each $q \le p$,
%let $T_q=\{t \in {}^{<\om }2 \mid \exists r \le q r \Vdash t=\dot r \restriction \dom(t)\}$.
%It is easy to check that $T_q$ is a downward closed, has no end-point.
%Let $B_q=\{f \in {}^\om 2\mid f \restriction n \in T_q$ for every $n<\om\}$.
%$B_q$ is a closed subset of Cantor space ${}^\om 2$.
%In addition if $q' \le q$ then $B_{q'} \subseteq B_q$.
%$A_n$ is centered, so $\{B_q \mid q \in A_n\}$ has the finite intersection property,
%and we can find $r_n \in \bigcap_{q \in A_n} B_q$.
%Then define $\dot f$ by $p \Vdash$``$\dot f(n)=m \iff m<\om$ is the least $m$ with
%$\dot r \restriction m\neq r_n \restriction m$''.
%We see that $p \Vdash \dot f \not \le g$
%for every $g \in {}^\om \om \cap V$.
%If not, then there is $q \le p$ and $g \in {}^\om \om \cap V$
%such that
%$q \Vdash \dot f \le g$.
%Pick $n$ with $q \in A_n$.
%Since $r_n \in B_q$,
%there is $r \le q$ such that
%$r \Vdash$``$\dot r \restriction g(m)=r_n \restriction g(m)$.
%Then $r \Vdash \dot r(n) >g(m)$,
%a contradiction.

\subsection*{Acknowledgments}
This research was supported by 
JSPS KAKENHI Grant Nos. 18K03403 and 18K03404.

\printindex

\end{document}